\newtheorem{thm}{Theorem}[section]
\newtheorem{Pros}[thm]{Proposition}
\newtheorem{rem}[thm]{Remark}
\newtheorem{lemma}[thm]{Lemma}
\numberwithin{equation}{section} \allowdisplaybreaks \voffset=-0.2in
\begin{document}
\bigskip\bigskip

\centerline{\Large\bf  Global regularity
for the 2D Oldroyd-B model   }

\smallskip

\centerline{\Large\bf in the corotational case }

\bigskip

\centerline{Zhuan Ye, Xiaojing Xu}
\medskip

\centerline{School of Mathematical Sciences, Beijing Normal
University,}
\medskip

\centerline{Laboratory of Mathematics and Complex Systems, Ministry
of Education,}
\medskip

\centerline{Beijing 100875, People's Republic of China}

\medskip

\centerline{E-mails: \texttt{yezhuan815@126.com; xjxu@bnu.edu.cn}}

\bigskip

{\bf Abstract:}~~%
This paper is dedicated to the Oldroyd-B model with fractional dissipation
$(-\Delta)^{\alpha}\tau$ for any $\alpha>0$. We establish the global smooth solutions to the Oldroyd-B model in the corotational case with arbitrarily small fractional powers of the Laplacian in two spatial dimensions. The methods described here are quite different from the tedious iterative approach used in recent paper \cite{XY}. Moreover, in the Appendix we provide some a priori estimates to the  Oldroyd-B model in the critical case which may be useful and of interest for future improvement. Finally, the global regularity to to the Oldroyd-B model in the corotational case with $-\Delta u$ replaced by $(-\Delta)^{\gamma}u$ for $\gamma>1$ are also collected in the Appendix.
Therefore our result is more closer to the resolution of the well-known global regularity issue on the critical 2D Oldroyd-B
model.

{\vskip 1mm
 {\bf AMS Subject Classification 2010:}\quad 76A10; 76D03;
76A05; 76N99.

 {\bf Keywords:}
Oldroyd-B model; Fractional dissipation;
Global smooth solutions.}

\vskip .4in
\section{Introduction}\label{intro}
The classical Oldroyd-B type model with diffusive stress can be written as
\begin{equation}\label{OLD1}
\left\{\aligned
&\partial_{t}u+(u\cdot\nabla)u-\nu\Delta u+\nabla \pi=\kappa \nabla\cdot \tau,\\
&\partial_{t}\tau+(u\cdot\nabla)\tau+\beta\tau-\mu\Delta \tau-Q(\nabla u,\tau)=\gamma\mathcal {D}u,\\
&\nabla\cdot u=0,\\
&u(x,0)=u_{0}(x),\,\tau(x,0)=\tau_{0}(x),
\endaligned \right.
\end{equation}
where $\nu\geq0,\,\mu\geq0,\,\beta\geq0,\,\kappa>0$ and $\gamma>0$
are real constant parameters. $u=u(x,t)=(u_{1}(x,t),u_{2}(x,t))\in
\mathbb{R}^{2}$ denote the velocity of the fluid, $\pi=\pi(x,t)\in
\mathbb{R}$ denotes scalar pressure and
$\tau=\tau(x,t)$ is the non-Newtonian part of the stress tensor,
($\tau(x,t)$ is a $(2,2)$ symmetric matrix). $\mathcal {D}u$ is the
symmetric part of the velocity gradient, namely $\mathcal
{D}u=\frac{1}{2}(\nabla u+\nabla u^{\top})$. $Q$ is a given bilinear
form
\begin{eqnarray}\label{t112}Q(\nabla u,\tau)=\Omega\tau-\tau\Omega+b(\mathcal {D}u\tau+\tau\mathcal
{D}u),\end{eqnarray} where $\Omega=\frac{1}{2}(\nabla u-\nabla
u^{\top})$ is the skew symmetric part of $\nabla u$ and $b\in [-1,\,1]$
is a parameter.

Let us say some words about the derivation of the system
(\ref{OLD1}) with $\mu=0$ (Indeed, this is the classical case). Incompressible fluids with constant density are being described by the set of equations
 \begin{equation}\label{LLOLD}
\left\{\aligned
&\partial_{t}u+(u\cdot\nabla)u=\varrho\nabla\cdot \varphi,\\
&\nabla\cdot u=0,
\endaligned \right.
\end{equation}
 where $\varrho$ is a constant parameter and $\varphi$ is the stress tensor which can be decomposed into $\varphi=-\widetilde{\pi} {\rm \textbf{Id}}+\tau$. Here $\widetilde{\pi}$ denotes the pressure of the fluid and {\rm \textbf{Id}} is the identity tensor.\\
 Recall the Oldroyd-B \cite{O} constitutive law of differential type
\begin{eqnarray}\label{LL1}
\tau+\lambda_{1}\frac{\mathcal{D}_{b}\tau}{\mathcal{D}t}=2\sigma\big(\mathcal {D}u+\lambda_{2}\frac{\mathcal{D}_{b}\mathcal{D}u}{\mathcal{D}t}\big),
\end{eqnarray}
where $\frac{\mathcal{D}_{b}}{\mathcal{D}t}$ denotes the "objective derivative" as follows
$$\frac{\mathcal{D}_{b}\tau}{\mathcal{D}t}:=\partial_{t}\tau+(u\cdot\nabla)\tau-Q(\nabla u,\tau).$$
Here $\sigma$ is the total viscosity of the
fluid, $\lambda_{1},\,\lambda_{2}$ are the relaxation time and retardation time with $0<\lambda_{2}\leq\lambda_{1}$. The symmetric tensor of constrains $\tau$ could be decomposed into the Newtonian
part $\tau_{N}$ and the elastic part $\tau_{E}$, namely
\begin{eqnarray}\label{LL2}
\tau=\tau_{N}+\tau_{E},\quad {\rm with} \,\,\, \tau_{N}=2\widetilde{\sigma}\mathcal {D}u,\end{eqnarray}
where $\widetilde{\sigma}=\frac{\lambda_{2}\sigma}{\lambda_{1}}$ is the solvent viscosity.\\ Combining (\ref{LL1}) and (\ref{LL2}), it is not difficult to check that
 $\tau_{E}$ meets the following equality
$$\tau_{E}+\lambda_{1}\frac{\mathcal{D}_{b}\tau_{E}}{\mathcal{D}t}=
2(\sigma-\widetilde{\sigma})\mathcal {D}u,$$
where $\sigma-\widetilde{\sigma}$ is the polymer viscosity. \\
Setting with some abuse of notation $\tau_{E}$ by $\tau$, we deduce from (\ref{LLOLD}) and above inequality that
  \begin{equation}\nonumber
\left\{\aligned
&\partial_{t}u+(u\cdot\nabla)u-\varrho\widetilde{\sigma}\Delta u+\nabla (\varrho\widetilde{ \pi})=\varrho\nabla\cdot \tau,\\
&\tau+\lambda_{1}\big(\partial_{t}\tau+(u\cdot\nabla)\tau-Q(\nabla u,\tau)\big)=
2(\sigma-\widetilde{\sigma})\mathcal {D}u.
\endaligned \right.
\end{equation}
 Thus, we can immediately obtain the system
(\ref{OLD1}) with $\mu=0$ by taking
$$\nu=\frac{\lambda_{2}\sigma\varrho}{\lambda_{1}},\,\,\,\kappa=\varrho,
\,\,\,\beta=\frac{1}{\lambda_{1}},\,\,\, \gamma=\frac{2\sigma}{\lambda_{1}}\big(1-\frac{\lambda_{2}}{\lambda_{1}}\big) \quad \mbox{and}\,\,\, \pi=\varrho\widetilde{ \pi}.$$
The above system of equations originally was introduced by Oldroyd
\cite{O} which is one of the basic macroscopic models for
visco-elastic flows such as polymer flows; fluids of this type have both elastic
properties and viscous properties. We refer the readers to
\cite{O,CM,FGO} for more discussions and the derivation of Oldroyd-B
model (\ref{OLD1}).

Due to their physical applications and mathematical significance, the Oldroyd-B model have recently attracted considerable attention and many important results on the existence theory and regularity criterion have been established.
In the most interesting case $\nu>0$ and $\mu=0$ (which is the classical case),
existence of local strong solutions to the
Oldroyd-B model was proved by Guillop$\rm\acute{ e}$ and Saut in
\cite{GS,GS1}. In the frame of critical Besov spaces, Chemin and
Masmoudi constructed global solutions to the incompressible Oldroyd-B model with small
initial data (see also Chen and Miao \cite{CM1}). In addition, non-blow
up criteria for Oldroyd-B model were given in \cite{CM,LMZ}.
For the Oldroyd-B fluids with diffusive stress (more precisely, the system (\ref{OLD1}) with $\nu>0,\,\mu>0$ and $b=1$) Constantin and Kliegel \cite{CK} established the existence and uniqueness of global strong solutions in the two dimensional case. The proof is based on the energy method and the use of the maximum principle. Very recently, Elgindi and Rousset \cite{EK} proved the global existence
of strong solutions with large data to the system (\ref{OLD1}) in
the case $\nu=0,\,\mu>0$ and $Q=0$. Moreover, they also obtained the
global well-posedness in the case of $\nu=0,\,\mu>0$ and $Q$ given
by (\ref{t112}) with small initial data. Many works have been
devoted to obtaining the global well-posedness in the case of small
initial data (see, e.g., \cite{LLZ,LZ,Zi,ZFZ}). Many other interesting results on the
Oldroyd-B and related models have been established (see, e.g.,
\cite{FHZ,FGO,HL,HW,HW1,L,LN,L1,LLZ0,M1,M2,M3} and the
references therein).

We also mention the following system in corotational case which is
an immediate case of the system (\ref{OLD1}) with $\nu>0$, $\mu=0$
and $b=0$:
\begin{equation}\label{New}
\left\{\aligned
&\partial_{t}u+(u\cdot\nabla)u-\nu\Delta u+\nabla \pi=\kappa \nabla\cdot \tau,\\
&\partial_{t}\tau+(u\cdot\nabla)\tau+\beta\tau+\eta(\tau\Omega-\Omega\tau)=\gamma\mathcal {D}u,\\
&\nabla\cdot u=0,\\
&u(x,0)=u_{0}(x),\,\tau(x,0)=\tau_{0}(x).
\endaligned \right.
\end{equation}
 The global existence of weak solutions (without uniqueness) to above system (\ref{New}) was proved
by Lions and Masmoudi \cite{LN}. Bejaoui and Mohamed Majdoub
\cite{BM} generalized the results in \cite{LN}. However, the global
existence of smooth solutions is open and quite challenging (see
\cite{EK} for more details). We mention that global weak for above system (\ref{New})
with $\eta(\tau\Omega-\Omega\tau)$ replaced by general $Q$, namely (\ref{t112}),
is still open up to now. As pointed out in \cite{EK}, in the case
where $\eta=0$ and $\nu>0$, the global existence of
smooth solutions to above system (\ref{New}) is also open and quite challenging.
Since there is no global existence
result for general initial data, we would like to add the fractional
dissipation $(-\Delta)^{\alpha}\tau$ to the stress tensor $\tau$
equation to guarantee the global well-posedness result.
 Therefore, it is natural to consider
the following Oldroyd-B model in the corotational case (with $b=0$)
with fractional dissipation
\begin{equation}\label{OLD}
\left\{\aligned
&\partial_{t}u+(u\cdot\nabla)u-\nu\Delta u+\nabla \pi=\kappa \nabla\cdot \tau,\\
&\partial_{t}\tau+(u\cdot\nabla)\tau+\beta\tau+\eta(\tau\Omega-\Omega\tau)+\mu(-\Delta)^{\alpha} \tau=\gamma\mathcal {D}u,\\
&\nabla\cdot u=0,\\
&u(x,0)=u_{0}(x),\,\tau(x,0)=\tau_{0}(x),
\endaligned \right.
\end{equation}
where $\alpha\in [0,\,1]$ (we mention that the small $\alpha$ is the main focus of this paper) and the
fractional Laplacian operator $(-\Delta)^{\alpha}$ is defined
through the Fourier transform, namely
$\widehat{(-\Delta)^{\alpha} f}(\xi)=|\xi|^{2\alpha}\widehat{f}(\xi).$
We make the convention that by $\alpha=0$ we mean
that there is no dissipation in the second equation of (\ref{OLD}).

Let us compare the Oldroyd-B model (\ref{OLD}) with the
two-dimensional Boussinesq system with partial dissipation which has
been considered by lots of works, just name a few (see, e.g.,
\cite{CW1,CHAE,HK0,HK1,HK2,HL1,JMWZ}). The coupling in the
Boussinesq system is simpler than the one in the Oldroyd models
since the vorticity equation is forced by the gradient of the
temperature in terms of the Boussinesq system, but that the
temperature solves an unforced convection-diffusion equation.
However, for the Oldroyd-B model (\ref{OLD}), the second equation
 has a forced term $\mathcal {D}u$ which prevents us from obtaining
 the $L^{q}-$norm of $\tau$ for $2\leq q\leq \infty$ while we can immediately get any  $L^{q}-$norm
 of temperature for the Boussinesq system. This is the big
 difficulty since the Oldroyd-B model is fully coupled. This is also
 the reason that we can not directly and fully follow the methods introduced by
 Hmidi, Keraani and Rousset \cite{HK1}.

The above system (\ref{OLD}) with $\mu=\eta=0$ is still a challenging
open problem as discussed in \cite{EK}. In this paper we would like
to show that for any small power $\alpha>0$ the system (\ref{OLD}) always admits a unique global smooth solution.

For the sake of simplicity,  we will limit ourselves to
$\nu=\mu=\eta=\kappa=\gamma=1$ and $\beta=0$ in the rest of the paper. The main result of this paper is the following
\begin{thm}\label{Th1} Suppose that $\alpha>0$ and $(u_{0}, \tau_{0})\in H^{s}(\mathbb{R}^{2})\times
H^{s}(\mathbb{R}^{2})$ for $s>2$. Then there exists a unique global smooth solution pair
$(u(x,t),\,\tau(x,t))$ to the system (\ref{OLD}) such that for any given
$T>0$
$$u\in C([0, T]; H^{s}(\mathbb{R}^{2}))\cap L^{2}([0, T]; H^{s+1}(\mathbb{R}^{2})),$$
$$\tau\in C([0, T]; H^{s}(\mathbb{R}^{2}))\cap L^{2}([0, T];
H^{s+\alpha}(\mathbb{R}^{2})).$$
\end{thm}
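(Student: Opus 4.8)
The plan is to establish global regularity by the standard continuation argument: local existence follows from routine fixed-point/energy methods, so the heart of the matter is deriving \emph{global a priori bounds} on $(u,\tau)$ in $H^s$ for any $T>0$. The key structural feature I would exploit is that we are in the corotational case ($b=0$), where the bilinear term reduces to $\eta(\tau\Omega-\Omega\tau)$. Because $\Omega$ is skew-symmetric and $\tau$ is symmetric, this term is \emph{energy-neutral}: testing the $\tau$-equation against $\tau$ kills the contribution $\int (\tau\Omega-\Omega\tau):\tau\,dx = 0$. This is precisely what makes the corotational case tractable and is the analogue of the cancellation one exploits in related coupled systems.

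First I would derive the basic $L^2$ energy estimate. Multiplying the $u$-equation by $u$ and the $\tau$-equation by $\tau$, integrating, and using $\nabla\cdot u=0$ together with the cancellation above, the coupling terms $\int (\nabla\cdot\tau)\cdot u\,dx$ and $\int \mathcal{D}u:\tau\,dx$ combine (after integration by parts, $\int(\nabla\cdot\tau)\cdot u = -\int \tau:\nabla u = -\int\tau:\mathcal{D}u$ by symmetry of $\tau$) to produce a controllable structure, yielding $\frac{d}{dt}(\|u\|_{L^2}^2+\|\tau\|_{L^2}^2) + \|\nabla u\|_{L^2}^2 + \|\Lambda^\alpha\tau\|_{L^2}^2 \lesssim \|\tau\|_{L^2}^2$, and hence global control of $u,\tau$ in $L^\infty_t L^2_x$ plus the dissipative gains in $L^2_t$. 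The next step, and the genuine difficulty, is to upgrade this to $H^1$ (equivalently, to control the vorticity $\omega=\nabla\times u$ and $\nabla\tau$). Taking the curl of the $u$-equation gives a vorticity equation forced by $\nabla\times(\nabla\cdot\tau)$, while differentiating the $\tau$-equation produces $\nabla\mathcal{D}u$ on the right, which is of the same order as $\nabla\omega$. Since $\alpha$ may be arbitrarily small, the dissipation $\|\Lambda^\alpha\nabla\tau\|_{L^2}$ is far too weak to directly absorb the top-order forcing $\nabla\mathcal{D}u$ in the $\tau$-equation.

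The main obstacle is therefore closing the $H^1$ (and subsequently $H^s$) estimate despite the weak, subcritical dissipation on $\tau$ and the fully coupled forcing term $\mathcal{D}u$. My plan to overcome this is to construct a suitable \emph{combined/commutator quantity} rather than estimating $\omega$ and $\nabla\tau$ separately: I would introduce a new unknown of the form $G = \omega - \mathcal{R}(\tau)$ for an appropriate singular-integral (Riesz-type) operator $\mathcal{R}$ chosen so that the worst interaction between the vorticity forcing $\nabla\times\nabla\cdot\tau$ and the stretching term $\nabla\mathcal{D}u$ cancels at leading order. This is the mechanism behind the results the authors cite (the Hmidi--Keraani--Rousset type approach for Boussinesq, adapted here), and the commutator $[\mathcal{R}, u\cdot\nabla]$ together with the fractional dissipation provides the smoothing needed to control the remainder. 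I would then use logarithmic Sobolev / Brezis--Wainger type inequalities (a $\mathrm{BMO}$ or $B^0_{\infty,\infty}$ bound on $\nabla u$ controlled by $\|\omega\|_{L^\infty}$ up to logarithms) to bootstrap to an $L^\infty_t L^\infty_x$ bound on $\omega$, and finally propagate full $H^s$ regularity by a standard commutator (Kato--Ponce) estimate on $\Lambda^s$ of both equations, with the accumulated lower-order bounds feeding a Gronwall inequality. The delicate point in all of this is verifying that the chosen $\mathcal{R}$ genuinely removes the top-order obstruction while the resulting commutator and remainder terms are controlled purely by quantities already bounded in the previous steps together with the small fractional dissipation.
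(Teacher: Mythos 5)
Your overall strategy coincides with the paper's: the same combined quantity $\Gamma=\omega-\mathcal{R}\tau$ with $\mathcal{R}=(-\Delta)^{-1}{\rm curl}\,{\rm div}$, the Hmidi--Keraani--Rousset commutator estimate for $[\mathcal{R},u\cdot\nabla]\tau$, and a final Kato--Ponce plus logarithmic-Sobolev/Gronwall closure at the $H^{s}$ level. However, one step in your plan, as stated, would fail: the bootstrap to an $L^{\infty}_{t}L^{\infty}_{x}$ bound on $\omega$. No such bound is obtained (or needed) in the paper, and it is not accessible by these tools: writing $\omega=\Gamma+\mathcal{R}\tau$, the operator $\mathcal{R}$ is not bounded on $L^{\infty}$, and, more fundamentally, $\tau$ admits no a priori $L^{\infty}$ bound because its equation is forced by $\mathcal{D}u$ --- exactly the difficulty the authors highlight as distinguishing Oldroyd-B from Boussinesq, where the temperature solves an unforced transport-diffusion equation and all $L^{q}$ bounds come for free. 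Even $\|\tau\|_{L^{r}}$ for finite $r$ cannot be bounded on its own; in Lemma \ref{L32} it is controlled only through a \emph{coupled} differential inequality: testing the $\tau$-equation against $|\tau|^{r-2}\tau$ (using the C\'ordoba--C\'ordoba positivity inequality for $\Lambda^{2\alpha}$ and the exact cancellation of the corotational term) gives $\frac{d}{dt}\|\tau\|_{L^{r}}^{2}\leq \frac{1}{4}\|\nabla\Gamma\|_{L^{2}}^{2}+C(\|\Gamma\|_{L^{2}}^{2}+\|\tau\|_{L^{r}}^{2})$, which is closed jointly with the $L^{2}$ estimate for $\Gamma$. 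The output is only $\tau\in L^{\infty}_{t}L^{r}_{x}$ and $\omega\in L^{2r/(r-2)}_{t}L^{r}_{x}$ for each finite $r$, with constants depending on $r$.

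The closure must therefore be rerouted as in the paper: Lemma \ref{L33} uses the time-integrability of $\|\omega\|_{L^{r}}$ with $r>\max\{2(1-\alpha)/\alpha,\,1/\alpha\}$, together with Gagliardo--Nirenberg interpolation against the weak dissipation $\|\Lambda^{\alpha}\nabla\tau\|_{L^{2}}$, to obtain $\nabla\tau\in L^{\infty}_{t}L^{2}_{x}$ and $\nabla\omega\in L^{2}_{t}L^{2}_{x}$; then at the $H^{s}$ level one controls $\|\nabla u\|_{L^{\infty}}$ not via any pointwise vorticity bound but via the logarithmic Sobolev inequality $\|\nabla u\|_{L^{\infty}}\lesssim 1+\|u\|_{L^{2}}+\|\nabla\omega\|_{L^{2}}\log\big(e+\|\Lambda^{s}u\|_{L^{2}}+\|\Lambda^{s}\tau\|_{L^{2}}\big)$, closed by a log-Gronwall argument precisely because $\|\nabla\omega\|_{L^{2}}\in L^{2}_{t}$. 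One more point you should make explicit: the hypothesis $\alpha>0$ enters at a single place in Lemma \ref{L32}, namely the estimate of $\mathcal{R}(\tau\Omega-\Omega\tau)$ in the $\Gamma$-equation, where one needs $\|\tau\|_{L^{2/(1-\alpha)}}\lesssim\|\tau\|_{H^{\alpha}}$; your proposal attributes the needed smoothing generically to ``the commutator plus the fractional dissipation,'' but the commutator term is in fact handled for $\alpha=0$ as well, and it is this corotational remainder that genuinely requires $\alpha>0$.
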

\begin{rem}\rm
Indeed, the case $\nu>0,\,\mu>0,\,\alpha=1$ the system (\ref{OLD})
can be considered as a subcritical case in dimension two. The basic
energy method and the use of the maximum principle is enough to
ensure the existence of global smooth solutions (see for example
\cite{CK}). Therefore, in this paper we only focus upon the power
$0<\alpha<1$ (in fact, the number $\alpha$ is arbitrarily small). To the best of our knowledge, it has not been studied.
As a matter of fact, for the system (\ref{OLD}) with $ \alpha<1$, it
seems impossible to get global smooth solutions by using the direct
energy estimates. However, at present we are not able to show the global regularity result for the system (\ref{OLD}) in the case $\alpha=0$, even if $\eta=0$, but for this case some a priori estimates will be provided In the Appendix which may be useful and of interest for future improvement. Thus it is still an extremely complicated and interesting problem for the system (\ref{OLD}) with only $\alpha=0$.
\end{rem}
\begin{rem}\rm
It is worthy to emphasize that in recent paper \cite{XY}, we also
established the global regularity to the system (\ref{OLD}) with
$\eta=0$ and $\alpha>0$ by using the the iterative approach. We
point out that the shortcoming in \cite{XY} is the tedious
computations. At this point we note that also our approach for
obtaining the global solution to (\ref{OLD}) seems to be quite
different from the iterative approach in \cite{XY} and can be
applied to (\ref{OLD}) with $\eta=0$.
\end{rem}

\begin{rem}\rm
Note that Theorem \ref{Th1} is a global existence result of smooth
solutions without any size restriction on the initial data.
\end{rem}

Let us remark that considering the system (\ref{OLD}) with $\mu=0$ and $-\Delta u$ replaced by $(-\Delta)^{\gamma} u$ for any $\gamma>1$, namely
\begin{equation}\label{FROLD}
\left\{\aligned
&\partial_{t}u+(u\cdot\nabla)u+(-\Delta)^{\gamma} u+\nabla \pi= \nabla\cdot \tau,\\
&\partial_{t}\tau+(u\cdot\nabla)\tau+(\tau\Omega-\Omega\tau)=\mathcal {D}u,\\
&\nabla\cdot u=0,\\
&u(x,0)=u_{0}(x),\,\tau(x,0)=\tau_{0}(x),
\endaligned \right.
\end{equation}
one can also show that the corresponding system admits a unique global smooth solution, more precisely
\begin{thm}\label{Th2} Suppose that for any $\gamma>1$ and $(u_{0}, \tau_{0})\in H^{s}(\mathbb{R}^{2})\times
H^{s}(\mathbb{R}^{2})$ for $s>2$. Then there exists a unique global smooth solution pair
$(u(x,t),\,\tau(x,t))$ to the system (\ref{FROLD}) such that for any given
$T>0$
$$u\in C([0, T]; H^{s}(\mathbb{R}^{2}))\cap L^{2}([0, T]; H^{s+\gamma}(\mathbb{R}^{2})),$$
$$\tau\in C([0, T]; H^{s}(\mathbb{R}^{2})).$$
\end{thm}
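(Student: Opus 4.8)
The plan is to establish the result by the standard scheme of local well-posedness plus global-in-time a priori estimates, the whole point being that the super-critical dissipation $(-\Delta)^{\gamma}u$ with $\gamma>1$ gains strictly more than two derivatives and thereby compensates the complete absence of smoothing in the $\tau$-equation. Local existence and uniqueness in $H^{s}\times H^{s}$, $s>2$, is routine (Friedrichs mollification or a contraction argument), so it suffices to bound $\|u(t)\|_{H^{s}}+\|\tau(t)\|_{H^{s}}$ on every $[0,T]$. I would first record the basic energy law: pairing the $u$-equation with $u$ and the $\tau$-equation with $\tau$ in $L^{2}$, the corotational term drops because $\langle\tau\Omega-\Omega\tau,\tau\rangle=0$ pointwise (skew-symmetry of $\Omega$, symmetry of $\tau$, and cyclicity of the trace), while the coupling $\nabla\cdot\tau$ against $u$ cancels exactly with the forcing $\mathcal{D}u$ against $\tau$. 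This yields $u\in L^{\infty}_{t}L^{2}\cap L^{2}_{t}\dot H^{\gamma}$ and $\tau\in L^{\infty}_{t}L^{2}$.

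Next I would exploit the fact that the corotational rotation preserves the pointwise norm $|\tau|$: testing the $\tau$-equation against $|\tau|^{q-2}\tau$ again annihilates the term $\tau\Omega-\Omega\tau$ pointwise, so that $\frac{d}{dt}\|\tau\|_{L^{q}}\le\|\mathcal{D}u\|_{L^{q}}\lesssim\|\nabla u\|_{L^{q}}$ uniformly in $q$, whence
\[
\|\tau(t)\|_{L^{\infty}}\le\|\tau_{0}\|_{L^{\infty}}+\int_{0}^{t}\|\nabla u(s)\|_{L^{\infty}}\,ds .
\]
The upshot of this step is a clean reduction: every genuinely dangerous term in the higher-order estimates is controlled once the single quantity $\int_{0}^{T}\|\nabla u\|_{L^{\infty}}\,dt$ is known to be finite.

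Producing that bound a priori is the heart of the matter and the step I expect to be the main obstacle, precisely because $\tau$ carries no dissipation: its contribution $\nabla\cdot\tau$ to the velocity forcing cannot be smoothed by any diffusion of its own and must be absorbed purely through the super-critical dissipation of $u$ together with the weak $L^{q}$ control just obtained. I would treat the $u$-equation as a fractional heat equation $\partial_{t}u+(-\Delta)^{\gamma}u=\mathbb{P}(-u\cdot\nabla u+\nabla\cdot\tau)$ and run a parabolic bootstrap: starting from $u\in L^{2}_{t}\dot H^{\gamma}$ and $\tau\in L^{\infty}_{t}L^{q}$, the gain of $2\gamma>2$ derivatives in the semigroup $e^{-t(-\Delta)^{\gamma}}$ (maximal-regularity/smoothing estimates) lets one iteratively raise the space-time integrability of $u$ until $u\in L^{1}_{t}W^{1,\infty}_{x}$, equivalently $u\in L^{1}_{t}\dot H^{\sigma}$ for some $\sigma>2$; the surplus $\gamma-1>0$ is exactly what makes each iteration strictly improving.

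With $\int_{0}^{T}\|\nabla u\|_{L^{\infty}}\,dt<\infty$ and the resulting bound on $\|\tau\|_{L^{\infty}_{t,x}}$ in hand, the top-order estimate closes linearly. Applying $\Lambda^{s}=(-\Delta)^{s/2}$ to both equations and pairing with $\Lambda^{s}u$, $\Lambda^{s}\tau$, the transport parts vanish by incompressibility, the Kato--Ponce commutators are estimated using $H^{s}\hookrightarrow W^{1,\infty}$ (valid since $s>2$), and the top-order corotational piece $(\Lambda^{s}\tau)\Omega-\Omega(\Lambda^{s}\tau)$ drops once more by the same pointwise cancellation. Every remaining term carries at most $\Lambda^{s+1}u$, and because $\gamma>1$ the interpolation $\|\Lambda^{s+1}u\|_{L^{2}}\le\|\Lambda^{s}u\|_{L^{2}}^{1-1/\gamma}\|\Lambda^{s+\gamma}u\|_{L^{2}}^{1/\gamma}$ followed by Young's inequality absorbs it into the dissipation $\|\Lambda^{s+\gamma}u\|_{L^{2}}^{2}$. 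One is left with $\frac{d}{dt}E_{s}+\|\Lambda^{s+\gamma}u\|_{L^{2}}^{2}\lesssim(1+\|\nabla u\|_{L^{\infty}})E_{s}$ for $E_{s}=\|u\|_{H^{s}}^{2}+\|\tau\|_{H^{s}}^{2}$, and Grönwall together with the integrability of $\|\nabla u\|_{L^{\infty}}$ yields the global bound. Uniqueness follows from an $L^{2}$ estimate on the difference of two solutions, using the same two cancellations, and the standard continuation criterion upgrades these a priori bounds to the global smooth solution asserted in Theorem \ref{Th2}.
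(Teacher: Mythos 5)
Your proposal is sound and reaches the theorem, but by a genuinely different route from the paper. The paper does not bootstrap through the fractional heat semigroup: it transplants the hidden-structure method of its Theorem \ref{Th1}, setting $G=\omega-\mathcal{R}_{\gamma}\tau$ with $\mathcal{R}_{\gamma}=\Lambda^{-2\gamma}\,{\rm curl \,div}$, controlling the commutator $[\mathcal{R}_{\gamma},u\cdot\nabla]\tau$ in $B^{0}_{2,2}$ via an estimate of Wu--Xue \cite{WUXUE} (valid for $1<\gamma<\frac{3}{2}$, which is why the paper writes details only for $1<\gamma\le\frac{4}{3}$ and dismisses larger $\gamma$ as easier), and closing a single Gronwall loop for $\|G\|_{L^{2}}^{2}+\|\tau\|_{L^{r}}^{2}$; the crucial bound $\int_{0}^{T}\|\nabla u\|_{L^{\infty}}\,dt<\infty$ then drops out in one shot from $\|\Lambda^{2\gamma-2}\omega\|_{L^{2/(\gamma-1)}}\lesssim\|\Lambda^{\gamma}G\|_{L^{2}}+\|\tau\|_{L^{2/(\gamma-1)}}$ plus embedding. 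Your semigroup/maximal-regularity bootstrap is essentially the iterative scheme of \cite{XY} that this paper explicitly set out to replace, and it can be made to work, but you should make the interlacing explicit, since as written your displayed $L^{\infty}$ bound for $\tau$ presupposes exactly the quantity the bootstrap is supposed to produce: a priori $\tau\in L^{\infty}_{t}L^{q}$ only for $q\le\frac{2}{2-\gamma}$ (from $\nabla u\in L^{2}_{t}\dot H^{\gamma-1}\hookrightarrow L^{2}_{t}L^{2/(2-\gamma)}$ fed into $\frac{d}{dt}\|\tau\|_{L^{q}}\le\|\mathcal{D}u\|_{L^{q}}$); each Duhamel pass then gains at most $\gamma-1$ in $1/q$ for $\nabla u$ (local integrability of the kernel cost $(t-s)^{-(1+1/q-1/q')/\gamma}$ forces $1/q-1/q'<\gamma-1$), the improved $\nabla u$ upgrades $\tau$ to $L^{\infty}_{t}L^{q'}$, and only after finitely many alternating rounds, once $q>\frac{1}{\gamma-1}$, do you land $\nabla u\in L^{1}_{t}L^{\infty}$. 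After that, your top-order estimate coincides with the paper's proof in the Appendix (same corotational cancellation, same interpolation $\|\Lambda^{s+1}u\|_{L^{2}}\le\|\Lambda^{s}u\|_{L^{2}}^{1-1/\gamma}\|\Lambda^{s+\gamma}u\|_{L^{2}}^{1/\gamma}$; your Gronwall coefficient should also carry the now-harmless factors $\|\tau\|_{L^{\infty}}^{\frac{2\gamma}{2\gamma-1}}$ and $\|\nabla\tau\|_{L^{2}}^{\frac{2\gamma}{2\gamma-1}}$ appearing in the paper's $\mathcal{L}_{3}$, $\mathcal{L}_{4}$), and the uniqueness argument is identical. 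The trade-off: your approach avoids the Besov commutator machinery and treats all $\gamma>1$ by one uniform mechanism, at the price of an iteration whose length blows up as $\gamma\downarrow 1$; the paper's $G$-based argument is shorter, one-pass, and showcases the same technique used for its main Theorem \ref{Th1}.
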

We want to point out that the proof of Theorem \ref{Th2} can be achieved by the similar argument applied in proving Theorem \ref{Th1}, and thus the details are given in the Appendix.

Finally, the rest of this paper is organized as follows. In Section
2, we give the proof of the main result, namely, Theorem \ref{Th1}.  In the Appendix we provide some a priori estimates to the system (\ref{OLD}) with $\alpha=\eta=0$ which may be useful and of interest for future improvement. Moreover, we also give the details of the proof of Theorem \ref{Th2} in the Appendix.

\vskip .4in
\section{proof of Theorem \ref{Th1}}\setcounter{equation}{0}
This section is devoted to the proof of Theorem \ref{Th1}.
 Before proving the theorem, we first introduce the following
conventions and notations which will be used throughout this paper.
Throughout this paper, the letter $C$ denotes a general constant which may
be different from line to line. We shall sometimes use
the natation $A\lesssim B$ which stands for $A\leq C B$.
Finally, we denote: $\omega={\rm curl}
(u)=\partial_{1}u_{2}-\partial_{2}u_{1}$, $\nabla\cdot u={\rm div} (u)$,
$\Lambda=(-\Delta)^{\frac{1}{2}}$ and the standard commutator
notation $[\mathcal {R},\,f]g=\mathcal {R}(fg)-f\mathcal {R}g$. For
three $n$ order matrices  $\mathbb{A}$, $\mathbb{B}$ and $\tau$, we denote
$\mathbb{A}: \mathbb{B}=\sum_{i,j=1}^{n}a_{ij}b_{ij}$ where $a_{ij}$
and $b_{ij}$ are the components of matrices  $\mathbb{A}$ and
$\mathbb{B}$, respectively and $(\nabla\cdot\tau)_{j}=\sum_{i=1}^{2}\partial_{i}\tau_{ij}$.

The existence and uniqueness of local smooth solutions can be done
without any difficulty as in the case of the Euler and Navier-Stokes
equations, thus it is sufficient to establish a priori estimates.

To prove the main result, we need to establish two key lemmas,
namely Lemmas \ref{L32} and \ref{L33}. First, we can easily derive
the following energy estimate from the system (\ref{OLD}) which
holds true for any $\alpha\geq0$.
\begin{lemma}\label{L31}
For any corresponding solution $(u, \tau)$ of (\ref{OLD}), there
exist some constants $C$ such that for any $T>0$
\begin{eqnarray}\label{t201}
\|u(t)\|_{L^{2}}^{2}+\|\tau(t)\|_{L^{2}}^{2}+2\int_{0}^{T}{(\|\nabla
u\|_{L^{2}}^{2}+\|\Lambda^{\alpha}\tau\|_{L^{2}}^{2})\,dt}=
\|u_{0}\|_{L^{2}}^{2}+\|\tau_{0}\|_{L^{2}}^{2}\leq C<\infty
\end{eqnarray}
for any $t\in[0, T].$
\end{lemma}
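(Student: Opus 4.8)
The plan is to carry out the standard $L^2$ energy estimate on the coupled system, exploiting two structural cancellations: the skew-symmetry of the corotational term $\tau\Omega-\Omega\tau$, and the exact duality between the two coupling terms $\nabla\cdot\tau$ and $\mathcal{D}u$.

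First I would take the $L^2$ inner product of the velocity equation with $u$. Using $\nabla\cdot u=0$, the transport term $\int(u\cdot\nabla)u\cdot u\,dx$ and the pressure term $\int\nabla\pi\cdot u\,dx$ both vanish, the dissipation contributes $\|\nabla u\|_{L^2}^2$, and integration by parts on the right-hand side gives $\int(\nabla\cdot\tau)\cdot u\,dx=-\int\tau:\nabla u\,dx$. This yields
$$\frac{1}{2}\frac{d}{dt}\|u\|_{L^2}^2+\|\nabla u\|_{L^2}^2=-\int\tau:\nabla u\,dx.$$
Next I would take the matrix inner product of the stress equation with $\tau$ and integrate. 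Again $\nabla\cdot u=0$ annihilates $\int(u\cdot\nabla)\tau:\tau\,dx$, while the fractional dissipation contributes $\|\Lambda^{\alpha}\tau\|_{L^2}^2$ by Plancherel (recall $(-\Delta)^{\alpha}=\Lambda^{2\alpha}$). The key point is the corotational term: since $\tau$ is symmetric, cyclic invariance of the trace gives pointwise $(\tau\Omega-\Omega\tau):\tau=\mathrm{tr}(\tau\Omega\tau)-\mathrm{tr}(\Omega\tau^2)=0$, so this term drops out. Using the symmetry of $\tau$ once more, $\int\mathcal{D}u:\tau\,dx=\int\nabla u:\tau\,dx$. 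Hence
$$\frac{1}{2}\frac{d}{dt}\|\tau\|_{L^2}^2+\|\Lambda^{\alpha}\tau\|_{L^2}^2=\int\nabla u:\tau\,dx.$$

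Adding the two identities, the coupling contributions $-\int\tau:\nabla u\,dx$ and $\int\nabla u:\tau\,dx$ cancel exactly, leaving
$$\frac{1}{2}\frac{d}{dt}\big(\|u\|_{L^2}^2+\|\tau\|_{L^2}^2\big)+\|\nabla u\|_{L^2}^2+\|\Lambda^{\alpha}\tau\|_{L^2}^2=0.$$
Integrating in time over $[0,T]$ then produces the claimed equality, whose right-hand side is exactly $\|u_0\|_{L^2}^2+\|\tau_0\|_{L^2}^2$ and hence finite.

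No step here is genuinely difficult; this is a clean basic energy balance rather than a hard estimate. The only thing demanding care is the algebraic verification of the two cancellations — that the commutator $\tau\Omega-\Omega\tau$ is orthogonal to $\tau$ in the Frobenius inner product, and that the symmetrized gradient term pairs against $\tau$ in a way that annihilates the divergence term from the momentum equation. These cancellations rely on the symmetry of $\tau$ and on $b=0$ (the corotational case), and they are entirely independent of the value of $\alpha$, which is why the estimate holds for every $\alpha\geq 0$.
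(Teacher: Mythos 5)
Your proof is correct and follows essentially the same route as the paper: the basic $L^{2}$ energy balance with the two cancellations $\int_{\mathbb{R}^{2}}(\nabla\cdot\tau)\cdot u\,dx+\int_{\mathbb{R}^{2}}\mathcal{D}u:\tau\,dx=0$ and $\int_{\mathbb{R}^{2}}(\tau\Omega-\Omega\tau):\tau\,dx=0$, the latter being exactly the paper's identity (\ref{tttt11}) verified via the symmetry of $\tau$. Your trace-cyclicity argument is just a coordinate-free rephrasing of the paper's index computation, so there is nothing further to add.
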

\begin{proof}[\textbf{Proof of  Lemma \ref{L31}}]
Taking the inner product of $(\ref{OLD})_{1}$ with $u$ and the inner
product of $(\ref{OLD})_{2}$ with $\tau$, using the divergence free
property and summing up them, we easily get
\begin{eqnarray}\label{t2010}
\frac{1}{2}\frac{d}{dt}(\|u(t)\|_{L^{2}}^{2}+\|\tau(t)\|_{L^{2}}^{2})+\|\nabla
u\|_{L^{2}}^{2}+\|\Lambda^{\alpha}\tau\|_{L^{2}}^{2} =0,
\end{eqnarray}
where the following the cancelation identities have been applied
$$\int_{\mathbb{R}^{2}}{(\nabla\cdot\tau)\cdot u
\,dx}+\int_{\mathbb{R}^{2}}{\mathcal {D}u:\tau\,dx}=0 \quad \mbox{and}
\quad  \int_{\mathbb{R}^{2}}{(\tau\Omega-\Omega\tau):\tau\,dx}=0,\quad \big({\rm see } (\ref{tttt11})\big).$$
Integrating (\ref{t2010}) from $0$ to $t$, we obtain the desired
result.
\end{proof}
The following lemma proves a global bound for $\|\tau(t)\|_{L^{r}}$ and $\int_{0}^{T}{\|\omega(t)\|_{L^{r}}^{\frac{2r}{r-2}}\,dt}$ for any $2<r<\infty$.
This global bound is valid for any $\alpha>0$ which will play a
significant role in obtaining the higher integrability in terms of
the vorticity $w$ and the  stress tensor $\tau$. More precisely, we have the following lemma
\begin{lemma}\label{L32}
Assume that $\alpha>0$ and any $2<r<\infty$. For any corresponding
solution $(u, \tau)$ of (\ref{OLD}), there exist some constants $C$
such that for any $T>0$
\begin{eqnarray}\label{t202}
\|\tau(t)\|_{L^{r}}\leq C<\infty,\quad
\int_{0}^{T}{\|\omega(t)\|_{L^{r}}^{\frac{2r}{r-2}}\,dt}\leq C<\infty
\end{eqnarray}
for any $t\in[0, T].$
\end{lemma}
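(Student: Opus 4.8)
The plan is to establish the two bounds together, by coupling an $L^{r}$ estimate for $\tau$ with an $H^{1}$-type estimate for the velocity. The first estimate will show that $\frac{d}{dt}\|\tau\|_{L^{r}}$ is controlled by $\|\omega\|_{L^{r}}$, while the second will produce $\omega\in L^{\infty}([0,T];L^{2})\cap L^{2}([0,T];\dot H^{1})$; from the latter the stated integral bound on $\omega$ follows by a single application of the Gagliardo--Nirenberg inequality, and feeding it back into the former yields the $L^{r}$ bound on $\tau$. Only Lemma \ref{L31} and the fact that $u$ carries the full dissipation $-\Delta$ (hence two derivatives of smoothing) will be used as external input.

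For the stress I would test $(\ref{OLD})_{2}$ against $|\tau|^{r-2}\tau$ in the Frobenius inner product. The convection term disappears because $\nabla\cdot u=0$; the rotation term disappears by the same algebraic identity as in Lemma \ref{L31}, namely $\int(\tau\Omega-\Omega\tau):|\tau|^{r-2}\tau\,dx=0$, valid because $|\tau|^{r-2}\tau$ is symmetric and commutes with $\tau$; and the dissipation contributes a nonnegative term, $\int(-\Delta)^{\alpha}\tau:|\tau|^{r-2}\tau\,dx\ge 0$, by the vector-valued C\'ordoba--C\'ordoba inequality. Bounding the source $\mathcal{D}u$ by H\"older then gives $\frac{d}{dt}\|\tau\|_{L^{r}}\le\|\mathcal{D}u\|_{L^{r}}\le C\|\omega\|_{L^{r}}$, the last inequality by the Calder\'on--Zygmund estimate $\|\nabla u\|_{L^{r}}\lesssim\|\omega\|_{L^{r}}$. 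Integrating in time gives $\|\tau(t)\|_{L^{r}}\le\|\tau_{0}\|_{L^{r}}+C\int_{0}^{t}\|\omega(s)\|_{L^{r}}\,ds$, so it remains to control the vorticity.

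For the velocity I would differentiate $(\ref{OLD})_{1}$ and $(\ref{OLD})_{2}$ once in space and pair with $\partial_{k}u$ and $\partial_{k}\tau$. The crucial point is that the linear coupling cancels at this order exactly as at the $L^{2}$ level: integrating $\int\partial_{k}(\nabla\cdot\tau)\cdot\partial_{k}u\,dx$ by parts reproduces $-\int\partial_{k}\mathcal{D}u:\partial_{k}\tau\,dx$ because $\tau$ is symmetric, so the two coupling contributions annihilate. One is then left with the dissipations $\|\Delta u\|_{L^{2}}^{2}=\|\nabla\omega\|_{L^{2}}^{2}$ and $\|\Lambda^{1+\alpha}\tau\|_{L^{2}}^{2}$ against the transport and rotation nonlinearities. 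The velocity self-transport is the familiar two-dimensional enstrophy cubic term, absorbed by $\|\Delta u\|_{L^{2}}^{2}$ with Gr\"onwall factor $\|\nabla u\|_{L^{2}}^{2}$, which is integrable in time by Lemma \ref{L31}. Once the estimate is closed one obtains $\omega\in L^{\infty}([0,T];L^{2})\cap L^{2}([0,T];\dot H^{1})$, and then the two-dimensional inequality $\|\omega\|_{L^{r}}\lesssim\|\omega\|_{L^{2}}^{2/r}\|\nabla\omega\|_{L^{2}}^{1-2/r}$, raised to the power $\tfrac{2r}{r-2}$, turns the exponent of $\|\nabla\omega\|_{L^{2}}$ into exactly $2$ and yields $\int_{0}^{T}\|\omega\|_{L^{r}}^{2r/(r-2)}\,dt\lesssim\|\omega\|_{L^{\infty}_{t}L^{2}}^{4/(r-2)}\int_{0}^{T}\|\nabla\omega\|_{L^{2}}^{2}\,dt$, which is precisely the claimed bound and reveals the origin of the exponent $\tfrac{2r}{r-2}$.

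The hard part is closing the stress nonlinearities in this $H^{1}$ estimate, for these are exactly the second-order coupling the introduction warns about: the rotation term contributes $\int(\tau\,\partial_{k}\Omega-\partial_{k}\Omega\,\tau):\partial_{k}\tau\,dx$, of size $\int|\tau|\,|\nabla^{2}u|\,|\nabla\tau|\,dx$, and the stress transport contributes $\int\partial_{k}u\cdot\nabla\tau:\partial_{k}\tau\,dx$, of size $\int|\nabla u|\,|\nabla\tau|^{2}\,dx$, both confronted with only the weak dissipation $\|\Lambda^{1+\alpha}\tau\|_{L^{2}}$ when $\alpha$ is small. The plan is never to demand a full derivative of $\tau$: I would split these integrals by H\"older, placing $\tau$ (resp. $\nabla u$) in $L^{r_{0}}$ for a large auxiliary exponent $r_{0}\ge 1/\alpha$, one factor $|\nabla^{2}u|$ in $L^{2}$ against the velocity dissipation, and the remaining power of $\nabla\tau$ in $L^{2r_{0}/(r_{0}-1)}$, interpolated between $\dot H^{1}$ and $\dot H^{1+\alpha}$. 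This interpolation is legitimate precisely because $\alpha>0$ and $r_{0}$ is large, so that the interpolation weight $\theta=1/(r_{0}\alpha)$ does not exceed $1$; the $\|\Lambda^{1+\alpha}\tau\|_{L^{2}}$ factor it produces is absorbed by the dissipation, and the surviving coefficient is either a power $\|\tau\|_{L^{r_{0}}}^{p}$, bounded in time by the $L^{r_{0}}$ estimate above, or a power of $\|\omega\|_{L^{r_{0}}}$, integrable in time by the Gagliardo--Nirenberg bound just obtained. Since the $L^{r_{0}}$ bound on $\tau$ itself only requires the time integral of $\|\omega\|_{L^{r_{0}}}$, the two estimates must be closed simultaneously by a Gr\"onwall and continuation argument, in which the positivity $\alpha>0$ supplies exactly the extra integrability needed to overcome the second-order coupling.
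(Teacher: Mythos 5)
Your first step is sound and coincides with the paper's: testing $(\ref{OLD})_{2}$ against $|\tau|^{r-2}\tau$, using the symmetry cancellation (\ref{tttt11}), the C\'ordoba--C\'ordoba positivity (\ref{t211}) and Calder\'on--Zygmund to get $\frac{d}{dt}\|\tau\|_{L^{r}}\leq C\|\omega\|_{L^{r}}$ is exactly (\ref{t210})--(\ref{t213}). The gap is in your main step, and it is fatal as stated. The $H^{1}$-level estimate you propose is precisely the paper's Lemma \ref{L33}, which is carried out \emph{after} Lemma \ref{L32} because it cannot be closed without Lemma \ref{L32}'s output. In your simultaneous scheme the Gr\"onwall coefficient produced by the stress-transport term is $\|\omega\|_{L^{r_{0}}}^{\alpha r_{0}/(\alpha r_{0}-1)}$, and you propose to control its time integral through $\|\omega\|_{L^{r_{0}}}\lesssim\|\omega\|_{L^{2}}^{2/r_{0}}\|\nabla\omega\|_{L^{2}}^{(r_{0}-2)/r_{0}}$ --- that is, through the very dissipation $\int_{0}^{T}\|\nabla\omega\|_{L^{2}}^{2}\,dt$ that the same inequality is supposed to deliver. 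Substituting, the coefficient is $\lesssim X^{\sigma}\|\nabla\omega\|_{L^{2}}^{2\beta}$ with $X=\|\nabla u\|_{L^{2}}^{2}+\|\nabla\tau\|_{L^{2}}^{2}$ and $\beta=\frac{(r_{0}-2)\alpha}{2(\alpha r_{0}-1)}<1$ for $r_{0}>\frac{2(1-\alpha)}{\alpha}$; absorbing $\|\nabla\omega\|_{L^{2}}^{2\beta}$ into the dissipation by Young leaves a superlinear inequality of the form $X'\lesssim X^{1/(1-\beta)}$ plus lower order, a Riccati-type bound giving only local-in-time control --- Gr\"onwall has no a priori integrable coefficient to run on, and your "continuation argument" can only yield a bound on a small time interval depending on the size of the data. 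The rotation term suffers the same circularity: its coefficient is a power of $\|\tau\|_{L^{r_{0}}}$, which your step 1 bounds only via $\int_{0}^{t}\|\omega\|_{L^{r_{0}}}\,ds$. This is exactly the obstruction the paper alludes to when it says direct energy estimates cannot work for $\alpha<1$.

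The idea you are missing is the hidden-structure quantity of Hmidi--Keraani--Rousset, brought to Oldroyd-B by Elgindi--Rousset: set $\mathcal{R}=(-\Delta)^{-1}{\rm curl}\,{\rm div}$ and $\Gamma=\omega-\mathcal{R}\tau$, which removes the forcing ${\rm curl}\,{\rm div}\,\tau$ from the vorticity equation. The payoff is that the price of the substitution, the commutator $[\mathcal{R},u\cdot\nabla]\tau$, is estimated by (\ref{t207}) using only $\|\nabla u\|_{L^{2}}$, $\|u\|_{L^{2}}$ and $\|\tau\|_{B_{\infty,2}^{(-r-2)/(2r)}}\lesssim\|\tau\|_{L^{r}}$ --- no derivative of $\tau$ whatsoever --- while the term $\mathcal{R}(\tau\Omega-\Omega\tau)$ is the one place where $\alpha>0$ enters, requiring only $\|\Lambda^{\alpha}\tau\|_{L^{2}}$, which is square-integrable in time by Lemma \ref{L31}. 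Hence the $L^{2}$ estimate for $\Gamma$ and your $L^{r}$ estimate for $\tau$ couple into a differential system (\ref{t214}) whose coefficients are integrable by Lemma \ref{L31} alone, Gr\"onwall closes linearly, and $\int_{0}^{T}\|\omega\|_{L^{r}}^{2r/(r-2)}\,dt<\infty$ follows from your Gagliardo--Nirenberg computation applied to $\Gamma$ rather than $\omega$ (at this stage $\omega\in L^{2}_{t}\dot H^{1}$ is \emph{not} yet known; only $\Gamma$ is), together with $\|\mathcal{R}\tau\|_{L^{r}}\lesssim\|\tau\|_{L^{r}}$. Your $H^{1}$ computation, with its factors $\|\omega\|_{L^{r}}^{\alpha r/(\alpha r-1)}$ for $r>\max\{2(1-\alpha)/\alpha,\,1/\alpha\}$ made integrable by Lemma \ref{L32}, is then legitimate --- but it belongs to the proof of Lemma \ref{L33}, not of the present statement.
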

\begin{proof}[\textbf{Proof of  Lemma \ref{L32}}]
In order to get the above estimate, we first apply operator ${\rm curl}$
to the equation $(\ref{OLD})_{1}$ to obtain the vorticity $w$
equation as follows
\begin{eqnarray}\label{t203}\partial_{t}\omega+(u\cdot\nabla)\omega-\Delta \omega={\rm curl \,div } (\tau).\end{eqnarray}
However, the "vortex stretching" term ${\rm curl \,div } (\tau)$ appears to
prevent us from proving any global bound for $\omega$, even though one
combines this vorticity equation with the equation
$(\ref{OLD})_{2}$. To overcome this difficulty, we exploit the method introduced by
Hmidi, Keraani and Rousset \cite{HK1,HK2} to treat the critical Boussinesq equations.
Their natural idea would be to eliminate ${\rm curl \,div }
(\tau)$ from the vorticity equation. Here we would like to mention that Elgindi-Rousset \cite{EK} first bring the trick to Oldroyd-B model.
To realize this idea, we take
$\mathcal {R}$ as the singular integral operator
$$\mathcal {R}=(-\Delta)^{-1}{\rm curl \,div }.$$
Applying the operator $\mathcal {R}$ to equation $(\ref{OLD})_{2}$, one has
\begin{eqnarray}\label{ZZtt}\partial_{t}\mathcal {R}\tau+(u\cdot\nabla)\mathcal {R}\tau=\mathcal {R}\mathcal {D}u-\mathcal {R}\Lambda^{2\alpha} \tau-[\mathcal {R},\,u\cdot\nabla]\tau-\mathcal {R}(\tau\Omega-\Omega\tau).\end{eqnarray}
We set the combined quantity $\Gamma$ as follows
$$\Gamma=\omega-\mathcal {R}\tau.$$ By combining (\ref{t203}) and ($\ref{ZZtt}$), it is easy to verify that $\Gamma$ satisfies
\begin{eqnarray}\label{t204}\partial_{t}\Gamma+(u\cdot\nabla)\Gamma-\Delta \Gamma
=\mathcal {R}\Lambda^{2\alpha}\tau-\mathcal {R}\mathcal
{D}u+[\mathcal {R},\,u\cdot\nabla]\tau+ \mathcal
{R}(\tau\Omega-\Omega\tau).\end{eqnarray} Testing (\ref{t204}) by $\Gamma$, we obtain, after integration by parts
\begin{eqnarray}\label{t205}
\frac{1}{2}\frac{d}{dt}\|\Gamma\|_{L^{2}}^{2}+\|\nabla\Gamma\|_{L^{2}}^{2}=\mathcal{N}_{1}
+\mathcal{N}_{2}+\mathcal{N}_{3},
\end{eqnarray}
where
$$\mathcal{N}_{1}=\int_{\mathbb{R}^{2}}{(\mathcal {R}\Lambda^{2\alpha}\tau-\mathcal {R}\mathcal
{D}u) \Gamma\,dx},\quad \mathcal{N}_{2}=\int_{\mathbb{R}^{2}}{[\mathcal
{R},\,u\cdot\nabla]\tau \Gamma\,dx},$$
$$\mathcal{N}_{3}=\int_{\mathbb{R}^{2}}{\mathcal
{R}(\tau\Omega-\Omega\tau)\Gamma\,dx}.$$
 In what follows, we will deal
with each term on the right-hand side of (\ref{t205}) separately.\\
Young inequality and interpolation inequality imply that
\begin{eqnarray}\label{t206}
\mathcal{N}_{1} &\leq& \|\Lambda^{\alpha}\tau\|_{L^{2}} \|\mathcal
{R}\Lambda^{\alpha}\Gamma \|_{L^{2}}+\|\mathcal {R}\mathcal
{D}u\|_{L^{2}}\|\Gamma
\|_{L^{2}}\nonumber\\
&\lesssim& \|\Lambda^{\alpha}\tau\|_{L^{2}} \|\Lambda^{\alpha}\Gamma
\|_{L^{2}}+\|\mathcal {D}u\|_{L^{2}}\|\Gamma
\|_{L^{2}}\nonumber\\
&\lesssim& \|\Lambda^{\alpha}\tau\|_{L^{2}} \|\Gamma
\|_{L^{2}}^{1-\alpha}\|\nabla\Gamma \|_{L^{2}}^{\alpha}+\|\nabla
u\|_{L^{2}}\|\Gamma \|_{L^{2}}\nonumber\\
&\leq& \frac{1}{4}\|\nabla\Gamma\|_{L^{2}}^{2}+C\|\Gamma
\|_{L^{2}}^{2}+C(\|\Lambda^{\alpha}\tau\|_{L^{2}}^{2} +\|\nabla
u\|_{L^{2}}^{2}),
\end{eqnarray}
where we have used the following facts: $\|\mathcal
{R}f\|_{L^{p}}\leq C\|f\|_{L^{p}}$ for any $p\in(1,\,\infty)$ and
the interpolation $\|\Lambda^{\alpha}\Gamma \|_{L^{2}}\leq C
\|\Gamma \|_{L^{2}}^{1-\alpha}\|\nabla\Gamma \|_{L^{2}}^{\alpha}$
for any $0\leq\alpha\leq1.$\\
The term $\mathcal{N}_{2}$ is much more involved. To estimate it appropriately, we apply the following
commutator estimate (see Theorem 3.3 in \cite{HK1})
\begin{eqnarray}\label{t207}\|[\mathcal {R},\,u]f\|_{H^{s}}\leq C(s)
(\|\nabla u\|_{L^{2}}\|f\|_{B_{\infty,2}^{s-1}}+\|
u\|_{L^{2}}\|f\|_{L^{2}}),\end{eqnarray} for any smooth
divergence-free vector field  and any $0<s<1$. Here $B_{p,r}^{s}$
with $s\in \mathbb{R}$ and $p,\,r\in [1, \infty]$ denotes an
inhomogeneous Besov space. \\It follows from the above commutator
estimate (\ref{t207}) that
\begin{eqnarray}\label{t208}
\mathcal{N}_{2} &=& \int_{\mathbb{R}^{2}}{\nabla\cdot[\mathcal
{R},\,u]\tau \Gamma\,dx}\quad (\nabla\cdot u=0)\nonumber\\
&\lesssim& \|[\mathcal {R},\,u]\tau\|_{\dot{H}^{\frac{r-2}{2r}}}
\|\Gamma
\|_{\dot{H}^{\frac{r+2}{2r}}}\nonumber\\
&\lesssim& \|[\mathcal {R},\,u]\tau\|_{{H}^{\frac{r-2}{2r}}}
\|\Gamma
\|_{\dot{H}^{\frac{r+2}{2r}}}\nonumber\\
&\lesssim& (\|\nabla
u\|_{L^{2}}\|\tau\|_{B_{\infty,2}^{\frac{-r-2}{2r}}}+\|
u\|_{L^{2}}\|\tau\|_{L^{2}}) \|\Gamma
\|_{L^{2}}^{\frac{r-2}{2r}}\|\nabla\Gamma
\|_{L^{2}}^{\frac{r+2}{2r}}\nonumber\\
&\lesssim& (\|\nabla u\|_{L^{2}}\|\tau\|_{L^{r}}+\|
u\|_{L^{2}}\|\tau\|_{L^{2}}) \|\Gamma
\|_{L^{2}}^{\frac{r-2}{2r}}\|\nabla\Gamma
\|_{L^{2}}^{\frac{r+2}{2r}}\nonumber\\
&\leq& \frac{1}{8}\|\nabla\Gamma\|_{L^{2}}^{2}+ C\|\nabla
u\|_{L^{2}}^{\frac{4r}{3r-2}}\|\tau\|_{L^{r}}^{\frac{4r}{3r-2}}
\|\Gamma\|_{L^{2}}^{\frac{2(r-2)}{3r-2}}+C(\|
u\|_{L^{2}}\|\tau\|_{L^{2}})^{\frac{4r}{3r-2}}
\|\Gamma\|_{L^{2}}^{\frac{2(r-2)}{3r-2}},
\end{eqnarray}
where we have used the following facts:
$L^{r}(\mathbb{R}^{2})\hookrightarrow
B_{\infty,2}^{\frac{-r-2}{2r}}(\mathbb{R}^{2})$ (see Lemma \ref{LIM03}) with $2<r<\infty$ and
$\|f\|_{\dot{H}^{s}}\leq C \|f\|_{H^{s}}$ for any $s>0$.
\\
Finally, by Young inequality and Gagliardo-Nirenberg inequality, one can easily check that
\begin{eqnarray}\label{ye1}
\mathcal{N}_{3} &\leq& \|\mathcal
{R}(\tau\Omega-\Omega\tau)\|_{L^{\frac{2}{2-\alpha}}} \|\Gamma
\|_{L^{\frac{2}{\alpha}}}\nonumber\\
&\lesssim&
\|\tau\Omega-\Omega\tau\|_{L^{\frac{2}{2-\alpha}}} \|\Gamma
\|_{L^{\frac{2}{\alpha}}}\nonumber\\
&\lesssim& \|\Omega\|_{L^{2}}\|\tau\|_{L^{\frac{2}{1-\alpha}}}
\|\Gamma\|_{L^{2}}^{\alpha} \|\nabla\Gamma
\|_{L^{2}}^{1-\alpha}\qquad \big(\alpha>0\big)\nonumber\\
&\lesssim& \|\Gamma-\mathcal
{R}\tau\|_{L^{2}}\|\Lambda^{\alpha}\tau\|_{L^{2}}
\|\Gamma\|_{L^{2}}^{\alpha} \|\nabla\Gamma
\|_{L^{2}}^{1-\alpha}\nonumber\\
&\leq&
\frac{1}{4}\|\nabla\Gamma\|_{L^{2}}^{2}+C\|\tau\|_{H^{\alpha}}^{\frac{2}{1+\alpha}}\|\Gamma
\|_{L^{2}}^{2}+C(\|\tau\|_{L^{2}}\|\tau\|_{H^{\alpha}})^{\frac{2}{1+\alpha}}\|\Gamma
\|_{L^{2}}^{\frac{2\alpha}{1+\alpha}}.
\end{eqnarray}
Let us remark that this is the only place in the proof of Lemma \ref{L32} where we use the main assumption  $\alpha>0$.\\
Putting estimates (\ref{t206}), (\ref{t208}) and (\ref{ye1}) into
(\ref{t205}), absorbing the dissipative term gives us the bound by
\begin{eqnarray}
\frac{d}{dt}\|\Gamma\|_{L^{2}}^{2}+\|\nabla\Gamma\|_{L^{2}}^{2}&\leq&
C(\|\Lambda^{\alpha}\tau\|_{L^{2}}^{2} +\|\nabla
u\|_{L^{2}}^{2})+C(1+\|\tau\|_{H^{\alpha}}^{\frac{2}{1+\alpha}})\|\Gamma
\|_{L^{2}}^{2}\nonumber\\
&& + C\|\nabla
u\|_{L^{2}}^{\frac{4r}{3r-2}}\|\tau\|_{L^{r}}^{\frac{4r}{3r-2}}
\|\Gamma\|_{L^{2}}^{\frac{2(r-2)}{3r-2}}+C(\|
u\|_{L^{2}}\|\tau\|_{L^{2}})^{\frac{4r}{3r-2}}
\|\Gamma\|_{L^{2}}^{\frac{2(r-2)}{3r-2}}\nonumber\\&&
+C(\|\tau\|_{L^{2}}\|\tau\|_{H^{\alpha}})^{\frac{2}{1+\alpha}}\|\Gamma
\|_{L^{2}}^{\frac{2\alpha}{1+\alpha}}\nonumber\\
&\leq& C(\|\Lambda^{\alpha}\tau\|_{L^{2}}^{2} +\|\nabla
u\|_{L^{2}}^{2})+C(1+\|\tau\|_{H^{\alpha}}^{2})\|\Gamma
\|_{L^{2}}^{2}\nonumber\\
&& + C\|\nabla u\|_{L^{2}}^{\frac{4r}{3r-2}}(\|\tau\|_{L^{r}}^{2}+
\|\Gamma\|_{L^{2}}^{2}),\nonumber
\end{eqnarray}
where the following facts have been applied:
$\frac{4r}{3r-2}\leq2,\,\frac{2(r-2)}{3r-2}\leq2,
\,\frac{2}{1+\alpha}\leq2$ and $\frac{2\alpha}{1+\alpha}\leq2$.\\
As a result, it follows that
\begin{eqnarray}\label{t209}
\frac{d}{dt}\|\Gamma\|_{L^{2}}^{2}+\|\nabla\Gamma\|_{L^{2}}^{2}
&\leq& C(\|\Lambda^{\alpha}\tau\|_{L^{2}}^{2} +\|\nabla
u\|_{L^{2}}^{2})+C(1+\|\tau\|_{H^{\alpha}}^{2})\|\Gamma
\|_{L^{2}}^{2}\nonumber\\
&& + C\|\nabla u\|_{L^{2}}^{\frac{4r}{3r-2}}(\|\tau\|_{L^{r}}^{2}+
\|\Gamma\|_{L^{2}}^{2}).
\end{eqnarray}
In order to close the above inequality, we need to establish the differential inequality of estimate of $\|\tau\|_{L^{r}}$.
Multiplying the stress tensor $\tau$ equation of (\ref{OLD}) by
$|\tau|^{r-2}\tau$ and integrating over $\mathbb{R}^{2}$ with
respect to variable $x$, it holds that
\begin{align}\label{t210}\frac{1}{r}\frac{d}{dt}\|\tau\|_{L^{r}}^{r}+\int_{\mathbb{R}^{2}}
(\Lambda^{2\alpha}\tau)\tau|\tau|^{r-2}\,dx=\int_{\mathbb{R}^{2}}
\mathcal {D}u \tau|\tau|^{r-2}\,dx,
\end{align}
where the divergence-free condition has been used.  Meanwhile it is
worth pointing out the following simple fact also has been used
$$ \int_{\mathbb{R}^{2}}{(\tau\Omega-\Omega\tau):|\tau|^{r-2}\tau\,dx}=0$$
for any $r\geq2 $ due to the symmetry of $\tau$.\\ As a matter of fact, we have
\begin{eqnarray} \label{tttt11}\int_{\mathbb{R}^{2}}{(\tau\Omega-\Omega\tau):|\tau|^{r-2}\tau\,dx}
&=&\int_{\mathbb{R}^{2}}{\tau_{ik}\Omega_{kj}|\tau|^{r-2}\tau_{ij}\,dx}
-\int_{\mathbb{R}^{2}}{\Omega_{kj}\tau_{ji}|\tau|^{r-2}\tau_{ki}\,dx}\nonumber\\
&=&\int_{\mathbb{R}^{2}}{\tau_{ik}\Omega_{kj}|\tau|^{r-2}\tau_{ij}\,dx}
-\int_{\mathbb{R}^{2}}{\Omega_{kj}\tau_{ij}|\tau|^{r-2}\tau_{ik}\,dx}\nonumber\\
&=&0,\end{eqnarray}
where we used $\tau_{ji}=\tau_{ij}$ and $\tau_{ki}=\tau_{ik}$ due to the symmetry of $\tau$.
 Here and in the sequel we adopt the Einstein convention about summation over
repeated indices.\\
According to the
following positive inequality (see \cite{DD} for instance)
\begin{eqnarray}\label{t211}
\int_{\mathbb{R}^{n}}{|h(x)|^{p-2}h(x)\Lambda^{\alpha}h(x)\,dx}\geq
\frac{2}{p}\int_{\mathbb{R}^{n}}{(\Lambda^{\frac{\alpha}{2}}|h(x)|
^{\frac{p}{2}})^{2}\,dx}\geq0
\end{eqnarray}
for any $0\leq \alpha\leq2, \, p\geq2$, we thus conclude that
\begin{eqnarray}\label{t212}\frac{d}{dt}\|\tau\|_{L^{r}}^{2}\leq C\|\mathcal {D}u\|_{L^{r}}
\|\tau\|_{L^{r}}.
\end{eqnarray}
The boundedness of the Riesz operator between $L^{q}$ spaces ($1<q<\infty$)
and the definition of $\Gamma$ allow us to show
\begin{eqnarray}\label{t213}\frac{d}{dt}\|\tau\|_{L^{r}}^{2}
&\leq& C\|\mathcal {D}u\|_{L^{r}} \|\tau\|_{L^{r}}\nonumber\\
&\leq& C\|\omega\|_{L^{r}} \|\tau\|_{L^{r}}\nonumber\\
&\leq& C(\|\Gamma\|_{L^{r}}+\|\tau\|_{L^{r}} )\|\tau\|_{L^{r}}\nonumber\\
&\leq& C\|\Gamma\|_{L^{2}}^{\frac{2}{r}}\|\nabla
\Gamma\|_{L^{2}}^{\frac{r-2}{r}}\|\tau\|_{L^{r}}+\|\tau\|_{L^{r}}^{2}
\nonumber\\
&\leq&
\frac{1}{4}\|\nabla\Gamma\|_{L^{2}}^{2}+C(\|\Gamma\|_{L^{2}}^{2}+\|\tau\|_{L^{r}}^{2}),
\end{eqnarray}
where we used the following Gagliardo-Nirenberg inequality
$$\|\Gamma\|_{L^{r}}\leq C\|\Gamma\|_{L^{2}}^{\frac{2}{r}}\|\nabla
\Gamma\|_{L^{2}}^{\frac{r-2}{r}},\quad 2<r<\infty.$$
Adding up the above estimates $(\ref{t211})$ and $(\ref{t213})$ altogether, we obtain
\begin{eqnarray}\label{t214}
\frac{d}{dt}(\|\Gamma\|_{L^{2}}^{2}+\|\tau\|_{L^{r}}^{2})+\|\nabla\Gamma\|_{L^{2}}^{2}
&\leq& C(\|\Lambda^{\alpha}\tau\|_{L^{2}}^{2} +\|\nabla
u\|_{L^{2}}^{2})+C(1+\|\tau\|_{H^{\alpha}}^{2})\|\Gamma
\|_{L^{2}}^{2}\nonumber\\
&& + C(1+\|\nabla u\|_{L^{2}}^{\frac{4r}{3r-2}})(
\|\Gamma\|_{L^{2}}^{2}+\|\tau\|_{L^{r}}^{2}).
\end{eqnarray}
Apply the above Lemma \ref{L31} and the differential form
of Gronwall inequality to (\ref{t214}) to get
$$\|\Gamma(t)\|_{L^{2}}^{2}+\|\tau\|_{L^{r}}^{2}+\int_{0}^{T}{
\|\nabla\Gamma(s)\|_{L^{2}}^{2}\,ds}\leq
C<\infty.$$ The above estimate together with the quantity $\Gamma=\omega-\mathcal {R}\tau$ and
$\|\Gamma\|_{L^{r}}\leq C\|\Gamma\|_{L^{2}}^{\frac{2}{r}}\|\nabla
\Gamma\|_{L^{2}}^{\frac{r-2}{r}}$, we find that the following is immediate
$$\int_{0}^{T}{\|\omega(t)\|_{L^{r}}^{\frac{2r}{r-2}}\,dt}\leq
C<\infty.$$ Therefore, this concludes the proof of Lemma \ref{L32}.
\end{proof}
\begin{rem}\rm
Here we want to point out that Lemma \ref{L32} still holds true for the case $\alpha=0$ when the corotational term $\tau\Omega-\Omega\tau$ is absent from the system (\ref{OLD}).
\end{rem}
With the global bound (\ref{t202}) at our disposal, we are ready to
prove the following key lemma which is valid for any $\alpha>0$.
\begin{lemma}\label{L33} Suppose that $\alpha>0$ and
for any corresponding solution $(u, \tau)$ of (\ref{OLD}), there
exist some constants $C$ such that for any $T>0$
\begin{eqnarray}\label{t215}
\|\nabla\tau(t)\|_{L^{2}}^{2}+\int_{0}^{T}{(\|\nabla
\omega\|_{L^{2}}^{2}+\|\Lambda^{\alpha}\nabla\tau\|_{L^{2}}^{2})(s)\,ds}\leq
C<\infty
\end{eqnarray}
for any $t\in[0, T].$
\end{lemma}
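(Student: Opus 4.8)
The plan is to close a coupled estimate one derivative above Lemma~\ref{L31}: an $L^2$ bound for the vorticity $\omega$, whose parabolic term furnishes the dissipation $\|\nabla\omega\|_{L^2}^2$, together with a homogeneous $\dot H^1$ bound for $\tau$, whose fractional term furnishes $\|\Lambda^\alpha\nabla\tau\|_{L^2}^2$. For the first, I would test the vorticity equation (\ref{t203}) with $\omega$ and integrate by parts the forcing, $\int_{\mathbb R^2}\mathrm{curl}\,\mathrm{div}(\tau)\,\omega\,dx\lesssim\|\nabla\tau\|_{L^2}\|\nabla\omega\|_{L^2}$, to reach
\[
\frac{d}{dt}\|\omega\|_{L^2}^2+\|\nabla\omega\|_{L^2}^2\le \tfrac14\|\nabla\omega\|_{L^2}^2+C\|\nabla\tau\|_{L^2}^2 .
\]
For the second, I would apply $\nabla$ to the $\tau$-equation of (\ref{OLD}) and pair with $\nabla\tau$ (equivalently, test with $-\Delta\tau$, using the positivity (\ref{t211})), giving
\[
\tfrac12\tfrac{d}{dt}\|\nabla\tau\|_{L^2}^2+\|\Lambda^\alpha\nabla\tau\|_{L^2}^2=I_1+I_2+I_3,
\]
where $I_1=-\int(\partial_k u_j)\partial_j\tau:\partial_k\tau\,dx$ comes from the transport term (the genuine transport part drops by $\nabla\cdot u=0$), $I_2=-\int\nabla(\tau\Omega-\Omega\tau):\nabla\tau\,dx$ from the corotational term, and $I_3=\int\nabla\mathcal Du:\nabla\tau\,dx$ from the forcing.

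The decisive simplification in two dimensions is that $\Omega=\tfrac12\omega J$ with $J$ the fixed rotation matrix, so $\tau\Omega-\Omega\tau=\tfrac12\,\omega\,[\tau,J]$ is $\omega$ times a constant-coefficient linear function of $\tau$; hence $\nabla(\tau\Omega-\Omega\tau)$ splits into a good piece $\sim\omega\,\nabla\tau$ and a dangerous piece $\sim(\nabla\omega)\,\tau$. Likewise $\|\nabla\mathcal Du\|_{L^2}\lesssim\|\nabla^2u\|_{L^2}\lesssim\|\nabla\omega\|_{L^2}$. I would estimate the pieces not involving $\nabla\omega$ — namely $I_1$ and the good part of $I_2$ — by H\"older with the exponent triple $(\tfrac2\alpha,\tfrac{2}{1-\alpha},2)$, using the Calder\'on--Zygmund bound $\|\nabla u\|_{L^{2/\alpha}}\lesssim\|\omega\|_{L^{2/\alpha}}$, the $L^p$ bounds on $\tau$ from Lemma~\ref{L32}, and the Sobolev inequality $\|\nabla\tau\|_{L^{2/(1-\alpha)}}\lesssim\|\Lambda^\alpha\nabla\tau\|_{L^2}$; Young's inequality then yields a contribution $\le\tfrac14\|\Lambda^\alpha\nabla\tau\|_{L^2}^2+C\|\omega\|_{L^{2/\alpha}}^2\|\nabla\tau\|_{L^2}^2$. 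The terms carrying $\nabla\omega$ — the dangerous part of $I_2$ and all of $I_3$ — I would bound by $\|\nabla\omega\|_{L^2}\|\tau\|_{L^p}\|\nabla\tau\|_{L^q}$ (with $\|\tau\|_{L^p}$ bounded by Lemma~\ref{L32}) and use Gagliardo--Nirenberg $\|\nabla\tau\|_{L^q}\lesssim\|\nabla\tau\|_{L^2}^{1-\theta}\|\Lambda^\alpha\nabla\tau\|_{L^2}^{\theta}$ with $\theta<1$, so that a three-exponent Young inequality gives $\le\epsilon\big(\|\nabla\omega\|_{L^2}^2+\|\Lambda^\alpha\nabla\tau\|_{L^2}^2\big)+C\|\nabla\tau\|_{L^2}^2$.

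Adding the two differential inequalities and choosing $\epsilon$ small enough to absorb the $\|\nabla\omega\|_{L^2}^2$ and $\|\Lambda^\alpha\nabla\tau\|_{L^2}^2$ coming from the $\tau$-estimate into the dissipations on the left, I would arrive at
\[
\frac{d}{dt}\big(\|\omega\|_{L^2}^2+\|\nabla\tau\|_{L^2}^2\big)+c\big(\|\nabla\omega\|_{L^2}^2+\|\Lambda^\alpha\nabla\tau\|_{L^2}^2\big)\le C\big(1+\|\omega\|_{L^{2/\alpha}}^2\big)\big(\|\omega\|_{L^2}^2+\|\nabla\tau\|_{L^2}^2\big).
\]
Since Lemma~\ref{L32} with $r=\tfrac2\alpha$ gives $\int_0^T\|\omega\|_{L^{2/\alpha}}^{2/(1-\alpha)}\,dt<\infty$, and $2/(1-\alpha)\ge2$ on the finite interval $[0,T]$, the coefficient $1+\|\omega\|_{L^{2/\alpha}}^2$ lies in $L^1_t$; Gr\"onwall's inequality then bounds $\|\omega\|_{L^2}^2+\|\nabla\tau\|_{L^2}^2$ uniformly and, upon integrating, controls $\int_0^T(\|\nabla\omega\|_{L^2}^2+\|\Lambda^\alpha\nabla\tau\|_{L^2}^2)\,dt$, which is the assertion. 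The main obstacle is exactly the vortex-stretching coupling through the forcing $\mathcal Du$ and through $\nabla\Omega$: these produce products of the two \emph{distinct} dissipative norms, and the whole scheme hinges on choosing the interpolation exponents (here the H\"older triple tied to $r=2/\alpha$, which forces $\alpha>0$) so that these products are strictly subcritical and absorbable, while the surviving factor $\|\omega\|_{L^{2/\alpha}}^2$ remains a time-integrable Gr\"onwall weight supplied by Lemma~\ref{L32}.
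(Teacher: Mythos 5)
Your proposal is correct and follows essentially the same route as the paper: a combined energy estimate for $\|\omega\|_{L^{2}}^{2}+\|\nabla\tau\|_{L^{2}}^{2}$, with the terms $\mathrm{curl}\,\mathrm{div}(\tau)\cdot\omega$ and $\nabla\mathcal{D}u:\nabla\tau$ absorbed into $\|\nabla\omega\|_{L^{2}}^{2}$, the transport and corotational terms handled by H\"older, Calder\'on--Zygmund and Gagliardo--Nirenberg interpolation against $\|\Lambda^{\alpha}\nabla\tau\|_{L^{2}}^{2}$, and Gr\"onwall driven by the time-integrable weight from Lemma \ref{L32}. The only (immaterial) differences are that you fix $r=2/\alpha$ and use the endpoint Sobolev embedding $\dot H^{\alpha}(\mathbb{R}^{2})\hookrightarrow L^{2/(1-\alpha)}(\mathbb{R}^{2})$, giving the weight $\|\omega\|_{L^{2/\alpha}}^{2}$, whereas the paper keeps a general $r>\max\{\tfrac{2(1-\alpha)}{\alpha},\tfrac{1}{\alpha}\}$ with weight $\|\omega\|_{L^{r}}^{\frac{\alpha r}{\alpha r-1}}$, and that you make the 2D identity $\Omega=\tfrac12\omega J$ explicit where the paper simply splits $\nabla(\tau\Omega-\Omega\tau)$ into $|\nabla\tau|\,|\Omega|\,|\nabla\tau|$ and $|\tau|\,|\nabla\Omega|\,|\nabla\tau|$ pieces.
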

\begin{proof}[\textbf{Proof of  Lemma \ref{L33}}]
Applying $\nabla$ to the Oldroyd-B equations $(\ref{OLD})_{2}$ and testing the resulting equation by $\nabla\tau$, we get
\begin{eqnarray}\label{t216}
\frac{1}{2}\frac{d}{dt}\|\nabla
\tau\|_{L^{2}}^{2}+\|\Lambda^{\alpha}\nabla\tau\|_{L^{2}}^{2}
&=&\int_{\mathbb{R}^{2}}{\nabla\mathcal {D}u\cdot \nabla
\tau\,dx}-\int_{\mathbb{R}^{2}}{\nabla(u\cdot\nabla \tau): \nabla
\tau\,dx}\nonumber\\&&+\int_{\mathbb{R}^{2}}{\nabla(\tau\Omega-\Omega\tau)\cdot\nabla\tau\,dx}.
\end{eqnarray}
Multiplying the vorticity equation $(\ref{t203})$
by $ w$, noting the incompressibility condition, it leads to
\begin{eqnarray}\label{t217}
\frac{1}{2}\frac{d}{dt}\|\omega\|_{L^{2}}^{2}+\|\nabla \omega\|_{L^{2}}^{2}
&=&\int_{\mathbb{R}^{2}}{({\rm curl \,div}
(\tau))\omega\,dx}.
\end{eqnarray}
Combining (\ref{t216}) and (\ref{t217}), we arrive at
\begin{eqnarray}\label{t218}
&&\frac{1}{2}\frac{d}{dt}(\|\omega\|_{L^{2}}^{2}+\|\nabla
\tau\|_{L^{2}}^{2})+\|\nabla \omega\|_{L^{2}}^{2}+\|\Lambda^{\alpha}\nabla\tau\|_{L^{2}}^{2}\nonumber\\
&=&\int_{\mathbb{R}^{2}}{({\rm curl \,div}
(\tau))\omega\,dx}+\int_{\mathbb{R}^{2}}{\nabla\mathcal {D}u\cdot \nabla
\tau\,dx}-\int_{\mathbb{R}^{2}}{\nabla(u\cdot\nabla \tau): \nabla
\tau\,dx}\nonumber\\&&+
\int_{\mathbb{R}^{2}}{\nabla(\tau\Omega-\Omega\tau)\cdot\nabla\tau\,dx}\nonumber\\&:=&
\mathcal{K}_{1}+\mathcal{K}_{2}+\mathcal{K}_{3}+\mathcal{K}_{4}.
\end{eqnarray}
In what follows, we will deal with each term on the right-hand side
of (\ref{t218}) separately. First of all, the first two terms
$\mathcal{K}_{1}$ and  $\mathcal{K}_{2}$ are easy to handle. Indeed, integrating by
parts and taking advantage of Young inequality lead to
\begin{eqnarray}\label{t219}
\mathcal{K}_{1} &\leq& \|\nabla\tau\|_{L^{2}} \|\nabla w \|_{L^{2}}\nonumber\\
&\leq& \frac{1}{4}\|\nabla \omega\|_{L^{2}}^{2}+C\|\nabla \tau
\|_{L^{2}}^{2}.
\end{eqnarray}
The Young inequality directly gives
\begin{eqnarray}\label{t220}
\mathcal{K}_{2} &\leq& \|\nabla\tau\|_{L^{2}} \|\nabla \mathcal {D}u \|_{L^{2}}\nonumber\\
&\lesssim&\|\nabla\tau\|_{L^{2}} \|\nabla \omega \|_{L^{2}}
\nonumber\\&\leq& \frac{1}{4}\|\nabla \omega\|_{L^{2}}^{2}+C\|\nabla \tau
\|_{L^{2}}^{2}.
\end{eqnarray}
Recalling the incompressibility condition, the term $\mathcal{K}_{3}$ can be rewritten as
\begin{eqnarray}
\mathcal{K}_{3} &=& -\int_{\mathbb{R}^{2}}{\partial_{l}( u_{i}\partial_{i} \tau_{kj})\partial_{l}
\tau_{kj}\,dx}\nonumber\\&=&-\int_{\mathbb{R}^{2}}{\partial_{l} u_{i}\partial_{i} \tau_{kj}\partial_{l}
\tau_{kj}\,dx}.\nonumber
\end{eqnarray}
By H$\rm\ddot{o}$lder inequality and Gagliardo-Nirenberg inequality, we can obtain
\begin{eqnarray}\label{t221}
\mathcal{K}_{3} &\leq& C\|\nabla u\|_{L^{r}}\|\nabla \tau\|_{L^{\frac{2r}{r-1}}}^{2}
\nonumber\\&\leq&
C\|\omega\|_{L^{r}}\|\nabla \tau\|_{L^{2}}^{\frac{2(\alpha r-1)}{\alpha r}}\|\Lambda^{\alpha}\nabla\tau\|_{L^{2}}^{\frac{2}{\alpha r}}
\nonumber\\&\leq&
\frac{1}{2}\|\Lambda^{\alpha}\nabla\tau\|_{L^{2}}^{2}+C\|\omega\|_{L^{r}}^{\frac{\alpha r}{\alpha r-1}}\|\nabla \tau\|_{L^{2}}^{2},
\end{eqnarray}
where we have applied the following Gagliardo-Nirenberg inequality
$$\|\nabla \tau\|_{L^{\frac{2r}{r-1}}}\leq C \|\nabla \tau\|_{L^{2}}^{\frac{\alpha r-1}{\alpha r}}\|\Lambda^{\alpha}\nabla\tau\|_{L^{2}}^{\frac{1}{\alpha r}},\quad r>\frac{1}{\alpha}.$$
Similarly, the last term $\mathcal{K}_{4}$ can be bounded by
\begin{eqnarray}\label{t222}
\mathcal{K}_{4} &\leq& C \int_{\mathbb{R}^{2}}{|\nabla\tau|\,|\Omega|\,\nabla\tau|\,dx}+C \int_{\mathbb{R}^{2}}{|\tau|\,|\nabla\Omega|\,\nabla\tau|\,dx}
\nonumber\\&\leq&
C\|\nabla u\|_{L^{r}}\|\nabla \tau\|_{L^{\frac{2r}{r-1}}}^{2}+C\|\tau\|_{L^{\frac{2}{\alpha(1-\varepsilon)}}}
\|\nabla\Omega\|_{L^{2}}
\|\nabla\tau\|_{L^{\frac{2}{1-\alpha+\alpha\varepsilon}}}
\nonumber\\&\leq&
C\|\omega\|_{L^{r}}\|\nabla \tau\|_{L^{2}}^{\frac{2(\alpha r-1)}{\alpha r}}\|\Lambda^{\alpha}\nabla\tau\|_{L^{2}}^{\frac{2}{\alpha r}}+C\|\tau\|_{L^{\frac{2}{\alpha(1-\varepsilon)}}}
\|\nabla \omega\|_{L^{2}}
\|\nabla\tau\|_{L^{2}}^{\varepsilon}\|\Lambda^{\alpha}\nabla\tau\|_{L^{2}}^{1-\varepsilon}
\nonumber\\&\leq&
\frac{1}{4}\|\Lambda^{\alpha}\nabla\tau\|_{L^{2}}^{2}+ \frac{1}{4}\|\nabla \omega\|_{L^{2}}^{2}+C\|\omega\|_{L^{r}}^{\frac{\alpha r}{\alpha r-1}}\|\nabla \tau\|_{L^{2}}^{2}+C\|\tau\|_{L^{\frac{2}{\alpha(1-\varepsilon)}}}
^{\frac{2}{\varepsilon}}\|\nabla
\tau\|_{L^{2}}^{2},
\end{eqnarray}
where we can select $\varepsilon\in (0,\,1)$ suitably small to satisfy the following Gagliardo-Nirenberg inequality
$$\|\nabla\tau\|_{L^{\frac{2}{1-\alpha+\alpha\varepsilon}}}\leq C \|\nabla\tau\|_{L^{2}}^{\varepsilon}\|\Lambda^{\alpha}
\nabla\tau\|_{L^{2}}^{1-\varepsilon}.$$
Inserting the four previous estimates (\ref{t219}), (\ref{t220}), (\ref{t221}) and (\ref{t222}) into (\ref{t218}), ignoring the dissipative terms,
one obtains
\begin{eqnarray}\label{t223}
&&\frac{d}{dt}(\|\omega\|_{L^{2}}^{2}+\|\nabla
\tau\|_{L^{2}}^{2})+\|\nabla \omega\|_{L^{2}}^{2}+\|\Lambda^{\alpha}\nabla\tau\|_{L^{2}}^{2}\nonumber\\
&\leq&C\|\nabla \tau
\|_{L^{2}}^{2}+C\|\omega\|_{L^{r}}^{\frac{\alpha r}{\alpha r-1}}\|\nabla \tau\|_{L^{2}}^{2}+C\|\tau\|_{L^{\frac{2}{\alpha(1-\varepsilon)}}}
^{\frac{2}{\varepsilon}}\|\nabla
\tau\|_{L^{2}}^{2}.
\end{eqnarray}
Recalling the following bound obtained in Lemma \ref{L32}
$$\int_{0}^{T}{\|\omega(t)\|_{L^{r}}^{\frac{2r}{r-2}}\,dt}\leq
C<\infty,$$
it is easy to check that
$$\int_{0}^{T}{\|\omega(t)\|_{L^{r}}^{\frac{\alpha r}{\alpha r-1}}\,dt}\leq
C<\infty,$$
provided that $$r>\frac{2(1-\alpha)}{\alpha}.$$
Therefore, we choose the following $r$ in proving this Lemma \ref{L33},
$$r>\max\Big\{\frac{2(1-\alpha)}{\alpha},\,\,\,\frac{1}{\alpha}\Big\}.$$
With the aid of above observation, we can conclude the desired result by applying Gronwall inequality to (\ref{t223}).
Consequently, this concludes the proof of Lemma \ref{L33}.
\end{proof}

With Lemmas \ref{L32} and \ref{L33} at our disposal, we are now turning to give the proof of the main result as follows.
\begin{proof}[\textbf{Proof of Theorem \ref{Th1}}]
Applying operation $\Lambda^{s}$ with $s>2$ to system(\ref{OLD}) and
taking the $L^{2}$ inner product with $\Lambda^{s}u$ and
$\Lambda^{s}\tau$ respectively, adding them up, we can get
\begin{eqnarray}\label{t224}
&&\frac{1}{2}\frac{d}{dt}(\|\Lambda^{s}u(t)\|_{L^{2}}^{2}+
\|\Lambda^{s}\tau(t)\|_{L^{2}}^{2})+\|\Lambda^{s+1}
u\|_{L^{2}}^{2}+\|\Lambda^{s+\alpha}
\tau\|_{L^{2}}^{2}\nonumber\\
&\lesssim&\|\Lambda^{s}\tau(t)\|_{L^{2}} \|\Lambda^{s+1} u\|_{L^{2}}
+\int_{\mathbb{R}^{2}}{|[\Lambda^{s}, u\cdot\nabla]u\cdot
\Lambda^{s}u|\,dx}+\int_{\mathbb{R}^{2}}{|[\Lambda^{s},
u\cdot\nabla]\tau:
\Lambda^{s}\tau|\,dx}
\nonumber\\&&+\int_{\mathbb{R}^{2}}{\Lambda^{s}(\tau\Omega-\Omega\tau):\Lambda^{s}\tau\,dx}
\nonumber\\
&:=&\mathcal{ L}_{1}+\mathcal{ L}_{2}+\mathcal{ L}_{3}+\mathcal{ L}_{4}.
\end{eqnarray}
The first term can be bounded by
\begin{eqnarray}\label{t225}
\mathcal{ L}_{1}\leq \frac{1}{4}\|\Lambda^{s+1}
u\|_{L^{2}}^{2}+C\|\Lambda^{s}\tau\|_{L^{2}}^{2}.
\end{eqnarray}
To estimate $\mathcal{ L}_{2}$, $\mathcal{ L}_{3}$ and $\mathcal{ L}_{4}$, the following commutator
estimates and bilinear estimates will be used (see Kato-Ponce \cite{KP} and
Kenig-Ponce-Vega \cite{KPV})
\begin{eqnarray}\label{t226}\|[\Lambda^{s},
f]g\|_{L^{p}}\leq C(\|\nabla
f\|_{L^{p_{1}}}\|\Lambda^{s-1}g\|_{L^{p_{2}}}+\|\Lambda^{s}f\|_{L^{p_{3}}}\|g\|_{L^{p_{4}}}),
\end{eqnarray}
and
\begin{eqnarray}\label{t227}\|\Lambda^{s}(f g)\|_{L^{p}}\leq C(\| f\|_{L^{p_{1}}}
\|\Lambda^{s}g\|_{L^{p_{2}}}+\|\Lambda^{s}f\|_{L^{p_{3}}}\|g\|_{L^{p_{4}}})\end{eqnarray}
with $s>0,\,p_{2}, p_{3}\in(1, \infty)$ such that
$\frac{1}{p}=\frac{1}{p_{1}}+\frac{1}{p_{2}}=\frac{1}{p_{3}}+\frac{1}{p_{4}}.$\\
According to above commutator estimate (\ref{t226}), it follows that
\begin{eqnarray}\label{t228}
\mathcal{ L}_{2}&\leq& C \|[\Lambda^{s},
u\cdot\nabla]u\|_{L^{2}}\|\Lambda^{s}u\|_{L^{2}}\nonumber\\ &\leq& C
\|\nabla u\|_{L^{\infty}}\|\Lambda^{s}u\|_{L^{2}}^{2},
\end{eqnarray}
\begin{eqnarray}\label{t229}
\mathcal{ L}_{3}&\leq& C \|[\Lambda^{s},
u\cdot\nabla]\tau\|_{L^{2}}\|\Lambda^{s}\tau\|_{L^{2}}\nonumber\\
&\leq& C (\|\nabla
u\|_{L^{\infty}}\|\Lambda^{s}\tau\|_{L^{2}}+\|\nabla
\tau\|_{L^{\frac{2}{1-\alpha}}}\|\Lambda^{s}u\|_{L^{\frac{2}{\alpha}}})
\|\Lambda^{s}\tau\|_{L^{2}}\nonumber\\
&\leq& C (\|\nabla
u\|_{L^{\infty}}\|\Lambda^{s}\tau\|_{L^{2}}+\|\Lambda^{\alpha}\nabla
\tau\|_{L^{2}}\|\Lambda^{s}u\|_{L^{2}}^{\alpha}\|\Lambda^{s+1}
u\|_{L^{2}}^{1-\alpha}) \|\Lambda^{s}\tau\|_{L^{2}}\nonumber\\
&\leq& \frac{1}{4}\|\Lambda^{s+1} u\|_{L^{2}}^{2}+C \|\nabla
u\|_{L^{\infty}}\|\Lambda^{s}\tau\|_{L^{2}}^{2}+C\|\Lambda^{s}u\|_{L^{2}}^{2}
+\|\Lambda^{\alpha}\nabla \tau\|_{L^{2}}^{2}
\|\Lambda^{s}\tau\|_{L^{2}}^{2}.
\end{eqnarray}
Thanks to above bilinear estimate (\ref{t227}), it is easy to show
\begin{eqnarray}\label{t230}
\mathcal{L}_{4} &\leq& C \int_{\mathbb{R}^{2}}{\Lambda^{s}(\tau\Omega)\Lambda^{s}\tau\,dx}
\nonumber\\&\leq&
C\|\nabla u\|_{L^{r}}\|\Lambda^{s} \tau\|_{L^{\frac{2r}{r-1}}}^{2}+C\|\tau\|_{L^{2r}}
\|\Lambda^{s}\Omega\|_{L^{2}}
\|\Lambda^{s}\tau\|_{L^{\frac{2r}{r-1}}}
\nonumber\\&\leq&
C\|w\|_{L^{r}}\|\Lambda^{s}\tau\|_{L^{2}}^{\frac{2(\alpha r-1)}{\alpha r}}\|\Lambda^{s+\alpha}\tau\|_{L^{2}}^{\frac{2}{\alpha r}}+C\|\tau\|_{L^{2r}}
\|\Lambda^{s+1}u\|_{L^{2}}
\|\Lambda^{s}\tau\|_{L^{2}}^{\frac{\alpha r-1}{\alpha r}}\|\Lambda^{s+\alpha}\tau\|_{L^{2}}^{\frac{1}{\alpha r}}
\nonumber\\&\leq&
\frac{1}{4}\|\Lambda^{s+\alpha}\tau\|_{L^{2}}^{2}+ \frac{1}{4}\|\Lambda^{s+1}u\|_{L^{2}}^{2}+C\|w\|_{L^{r}}^{\frac{\alpha r}{\alpha r-1}}\|\Lambda^{s}\tau\|_{L^{2}}^{2}+C\|\tau\|_{L^{2r}}
^{\frac{2\alpha r}{\alpha r-1}}\|\Lambda^{s}
\tau\|_{L^{2}}^{2}.
\end{eqnarray}
Here we want to state that $r>\frac{1}{\alpha}$ as in proving $\mathcal{K}_{4}$.\\
Substituting all the preceding estimates into (\ref{t224}), one reaches
\begin{eqnarray}\label{t231}
&& \frac{d}{dt}(\|\Lambda^{s}u(t)\|_{L^{2}}^{2}+
\|\Lambda^{s}\tau(t)\|_{L^{2}}^{2})+\|\Lambda^{s+1}
u\|_{L^{2}}^{2}+\|\Lambda^{s+\alpha}
\tau\|_{L^{2}}^{2}\nonumber\\
&\lesssim&(1+\|\Lambda^{\alpha}\nabla \tau\|_{L^{2}}^{2}
+\|w\|_{L^{r}}^{\frac{\alpha r}{\alpha r-1}}+\|\tau\|_{L^{2r}}
^{\frac{2\alpha r}{\alpha r-1}})(\|\Lambda^{s}u(t)\|_{L^{2}}^{2}+
\|\Lambda^{s}\tau(t)\|_{L^{2}}^{2})\nonumber\\&&+ \|\nabla
u\|_{L^{\infty}}(\|\Lambda^{s}u(t)\|_{L^{2}}^{2}+
\|\Lambda^{s}\tau(t)\|_{L^{2}}^{2}).
\end{eqnarray}
Thus in order to deal with $\|\nabla u\|_{L^{\infty}}$, we turn our attention to the following standard logarithmic Sobolev inequality (see, e.g.,
\cite{kOT})
\begin{eqnarray}\|\nabla f\|_{L^{\infty}(\mathbb{R}^{2})}
\leq C\Big(1+\|f\|_{L^{2}(\mathbb{R}^{2})}+ \|\nabla (\nabla\times
f)\|_{L^{2}(\mathbb{R}^{2})} \log\big(e+\|\Lambda^{s}
f\|_{{L}^{2}(\mathbb{R}^{2})}\big)\Big),\nonumber
\end{eqnarray}
for all divergence-free functions $f$ with $f\in{H}^{s}(\mathbb{R}^{2})$ for any $s>2$.
It should be noted that the proof of this inequality is not particularly difficult via the Besov techniques but will not be reproduced here.\\
Applying above logarithmic Sobolev inequality to (\ref{t231}), it implies that
\begin{eqnarray}\label{t232}
&& \frac{d}{dt}(\|\Lambda^{s}u(t)\|_{L^{2}}^{2}+
\|\Lambda^{s}\tau(t)\|_{L^{2}}^{2})+\|\Lambda^{s+1}
u\|_{L^{2}}^{2}+\|\Lambda^{s+\alpha}
\tau\|_{L^{2}}^{2}\nonumber\\
&\lesssim&\Big(1+\|u\|_{L^{2} }+ \|\nabla \omega\|_{L^{2} }
\log\big(e+\|\Lambda^{s}u(t)\|_{L^{2}}^{2}+\|\Lambda^{s}\tau(t)\|_{L^{2}}^{2}\big)\Big)(\|\Lambda^{s}u(t)\|_{L^{2}}^{2}+
\|\Lambda^{s}\tau(t)\|_{L^{2}}^{2})\nonumber\\&&+(1+\|\Lambda^{\alpha}\nabla
\tau\|_{L^{2}}^{2}+\|\omega\|_{L^{r}}^{\frac{\alpha r}{\alpha r-1}}+\|\tau\|_{L^{2r}}
^{\frac{2\alpha r}{\alpha r-1}})(\|\Lambda^{s}u(t)\|_{L^{2}}^{2}+
\|\Lambda^{s}\tau(t)\|_{L^{2}}^{2})
.\nonumber
\end{eqnarray}
By the estimates obtained in Lemmas \ref{L32} and \ref{L33} and the standard Log-Gronwall argument, it follows from this inequality that
$$\|\Lambda^{s}u(t)\|_{L^{2}}^{2}+
\|\Lambda^{s}\tau(t)\|_{L^{2}}^{2}+\int_{0}^{T}{\big(\|\Lambda^{s+1}
u\|_{L^{2}}^{2}+\|\Lambda^{s+\alpha}
\tau\|_{L^{2}}^{2}\big)(s)\,ds}\leq C<\infty,$$
which is the desired global bounds in Theorem \ref{Th1}.
Moreover, the uniqueness is easy since the velocity and the stress
tensor are both Lipschitz.
Indeed, let $(u,\,\tau,\,\pi)$ and
$(\widetilde{u},\,\widetilde{\tau},\,\widetilde{\pi})$ be two
solutions of $(\ref{OLD})$ with the same initial data. We denote
$V=u-\widetilde{u}$, $W=\tau-\widetilde{\tau}$ and
$P=\pi-\widetilde{\pi}$. Then  the difference pair $(V,\,W,\,P)$
satisfies
\begin{equation}\label{DOLD}
\left\{\aligned
&\partial_{t}V+(u\cdot\nabla)V-\nu\Delta V+\nabla P= \nabla\cdot W-(V\cdot\nabla )\widetilde{u},\\
&\partial_{t}W+(u\cdot\nabla)W+(-\Delta)^{\alpha} W+F(V,W)= \mathcal {D}V-(V\cdot\nabla )\widetilde{\tau},\\
&\nabla\cdot u=0,\\
& W(0)=V(0)=0,
\endaligned \right.
\end{equation}
where $F(V,W)=W\Omega-\widetilde{\Omega}W+\frac{1}{2}\big(\widetilde{\tau} \nabla V -\widetilde{\tau} (\nabla V)^{T}+\nabla V \tau -(\nabla V)^{T} \tau \big)$.\\
Taking $L^{2}$-product of the above equations $(\ref{DOLD})_{1}$,
$(\ref{DOLD})_{2}$ with $V$  and $W$, respectively, it gives rise to
\begin{eqnarray}\nonumber
&&\frac{1}{2}\frac{d}{dt}(\|V(t)\|_{L^{2}}^{2}+\|W(t)\|_{L^{2}}^{2})+\|\nabla
 V\|_{L^{2}}^{2}+\|\Lambda^{\alpha} W \|_{L^{2}}^{2}\nonumber\\
&\lesssim& \|\nabla
\widetilde{u}\|_{L^{\infty}}\|V\|_{L^{2}}^{2}+\|
\widetilde{\tau}\|_{L^{\infty}}^{2}\|W\|_{L^{2}}^{2}+\|
{\tau}\|_{L^{\infty}}^{2}\|W\|_{L^{2}}^{2}+\|\nabla
\widetilde{\tau}\|_{L^{\infty}}\|V\|_{L^{2}}\|W\|_{L^{2}}\nonumber\\
&\lesssim&(\|\nabla \widetilde{u}\|_{L^{\infty}}+\|\nabla
\widetilde{\tau}\|_{L^{\infty}}+\|
{\tau}\|_{L^{\infty}}^{2}+\|
\widetilde{\tau}\|_{L^{\infty}}^{2})(\|V\|_{L^{2}}^{2}+\|W\|_{L^{2}}^{2}),\nonumber
\end{eqnarray}
where the following identity was used
$$\int_{\mathbb{R}^{2}}{(\nabla\cdot W)\cdot V
\,dx}+\int_{\mathbb{R}^{2}}{\mathcal {D}V:W\,dx}=0. $$ Thanks to the global $H^{s}$ bound with $s>2$, we know
$$\int_{0}^{T}{(\|\nabla \widetilde{u}\|_{L^{\infty}}+\|\nabla
\widetilde{\tau}\|_{L^{\infty}}+\|
{\tau}\|_{L^{\infty}}^{2}+\|
\widetilde{\tau}\|_{L^{\infty}}^{2})(s)\,ds}\leq C<\infty,\quad for\,\,
any\,\, 0\leq T<\infty.$$ Gronwall inequality implies the following estimate
\begin{eqnarray}\nonumber
&&\|V(t)\|_{L^{2}}^{2}+\|W(t)\|_{L^{2}}^{2}+\int_{0}^{T}{(\|\nabla
 V(s)\|_{L^{2}}^{2}+\|\Lambda^{\alpha} W (s)\|_{L^{2}}^{2})\,ds}\nonumber\\
&\leq& ( \|V(0)\|_{L^{2}}^{2}+\|W(0)\|_{L^{2}}^{2}){\rm
exp}\Big[\int_{0}^{T}{(\|\nabla \widetilde{u}\|_{L^{\infty}}+\|\nabla
\widetilde{\tau}\|_{L^{\infty}}+\|
{\tau}\|_{L^{\infty}}^{2}+\|
\widetilde{\tau}\|_{L^{\infty}}^{2})(s)\,ds}\Big].\nonumber
\end{eqnarray}
Therefore, the
desired uniqueness is an easy consequence of
$\|V(0)\|_{L^{2}}^{2}=\|W(0)\|_{L^{2}}^{2}=0$. Consequenltly, the statements in Theorem
\ref{Th1} are proved.
Thus, this completes the proof of Theorem \ref{Th1}.
\end{proof}

\vskip .3in
\appendix
\section{some a priori estimates and the proof of Theorem \ref{Th2}}
\subsection{some a priori estimates}
 As discussed in the introduction, the global regularity to the
  system (\ref{OLD}) with $\alpha=\eta=0$ is open and quite
  challenging. Thus, in this Appendix we provide some a priori
   estimates to the following system which may be useful and
   of interest for future improvement,
 \begin{equation}\label{Open}
\left\{\aligned
&\partial_{t}u+(u\cdot\nabla)u-\Delta u+\nabla \pi=\nabla\cdot \tau,\\
&\partial_{t}\tau+(u\cdot\nabla)\tau=\mathcal {D}u,\\
&\nabla\cdot u=0,\\
&u(x,0)=u_{0}(x),\,\tau(x,0)=\tau_{0}(x).
\endaligned \right.
\end{equation}
 Here, it is worthy to remark that the global bound (\ref{t202}) is enough for us to prove Theorem \ref{Th1}. In other words, the global a priori estimates stated in this Appendix will not be used in proving Theorem \ref{Th1}.

The global bound (\ref{t202}) allows us to improve the integrability
of the vorticity $w$ as follows. Now we would like
to state and prove the first lemma.
\begin{lemma}\label{LIM01}
 Let $(u, \tau)$ be any corresponding
solution of (\ref{Open}). For any $2<p,\,\,q<\infty$, there exist some constants $C$
such that for any $T>0$
\begin{eqnarray}\label{IMt01}
\int_{0}^{T}{\|\nabla u(t)\|_{L^{p}}^{q}\,dt}\leq C<\infty.
\end{eqnarray}
\end{lemma}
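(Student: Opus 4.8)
The plan is to reduce the stated space-time bound to a uniform-in-time $L^{p}$ bound on the vorticity, which is strictly stronger and trivially implies the claim on a finite interval. By the Biot--Savart law and the $L^{p}$-boundedness of the Riesz transforms ($1<p<\infty$) one has $\|\nabla u\|_{L^{p}}\lesssim\|\omega\|_{L^{p}}$, and since $\omega=\Gamma+\mathcal{R}\tau$ with $\mathcal{R}=(-\Delta)^{-1}{\rm curl\,div}$ a zeroth-order operator, $\|\omega\|_{L^{p}}\lesssim\|\Gamma\|_{L^{p}}+\|\tau\|_{L^{p}}$. Because Lemma \ref{L32} (in particular the global bound (\ref{t202})) remains valid for (\ref{Open}) --- this is exactly the case $\alpha=\eta=0$ covered by the remark following Lemma \ref{L32} --- the quantity $\|\tau\|_{L^{p}}\le C$ is already controlled for every $2<p<\infty$. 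Thus everything reduces to proving $\Gamma\in L^{\infty}([0,T];L^{p})$.

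For this I would work with the $\Gamma$-equation specialized to (\ref{Open}). Repeating the computation leading to (\ref{t204}) but now with no dissipation and no corotational term, the cancellation ${\rm curl\,div}(\tau)+\Delta\mathcal{R}\tau=0$ yields
$$\partial_{t}\Gamma+(u\cdot\nabla)\Gamma-\Delta\Gamma=-\mathcal{R}\mathcal{D}u+[\mathcal{R},\,u\cdot\nabla]\tau.$$
I would then run an $L^{p}$ energy estimate: testing against $|\Gamma|^{p-2}\Gamma$, the transport term vanishes by $\nabla\cdot u=0$ and the diffusion term is nonnegative (as in (\ref{t211})), so after dividing by $\|\Gamma\|_{L^{p}}^{p-1}$ one obtains
$$\frac{d}{dt}\|\Gamma\|_{L^{p}}\le\|\mathcal{R}\mathcal{D}u\|_{L^{p}}+\|[\mathcal{R},\,u\cdot\nabla]\tau\|_{L^{p}}.$$
The first forcing term is harmless: $\|\mathcal{R}\mathcal{D}u\|_{L^{p}}\lesssim\|\nabla u\|_{L^{p}}\lesssim\|\omega\|_{L^{p}}\lesssim\|\Gamma\|_{L^{p}}+\|\tau\|_{L^{p}}\le\|\Gamma\|_{L^{p}}+C$, which feeds directly into Gronwall. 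Since $\Gamma_{0}=\omega_{0}-\mathcal{R}\tau_{0}\in L^{p}$ (from $u_{0},\tau_{0}\in H^{s}$, $s>2$), closing the estimate requires only the $L^{1}_{t}$-integrability of the commutator forcing.

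The main obstacle is therefore the commutator $[\mathcal{R},\,u\cdot\nabla]\tau$ in $L^{p}$. Here I would invoke the standard Calder\'on--Zygmund commutator estimate $\|[\mathcal{R},\,u\cdot\nabla]\tau\|_{L^{p}}\lesssim\|\nabla u\|_{L^{p_{1}}}\|\tau\|_{L^{p_{2}}}$, valid for $1<p,p_{1},p_{2}<\infty$ with $\frac{1}{p}=\frac{1}{p_{1}}+\frac{1}{p_{2}}$; this is the $L^{p}$ analogue of the $H^{s}$-commutator estimate (\ref{t207}) used for Lemma \ref{L32}, and follows from a paraproduct decomposition since the commutator gains one derivative. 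Choosing $p_{1}=p_{2}=2p$ and using $\|\tau\|_{L^{2p}}\le C$ together with $\|\nabla u\|_{L^{2p}}\lesssim\|\omega\|_{L^{2p}}$ gives $\int_{0}^{T}\|[\mathcal{R},\,u\cdot\nabla]\tau\|_{L^{p}}\,dt\lesssim\sup_{[0,T]}\|\tau\|_{L^{2p}}\int_{0}^{T}\|\omega\|_{L^{2p}}\,dt$, and the latter time integral is finite because (\ref{t202}) provides $\int_{0}^{T}\|\omega\|_{L^{2p}}^{\frac{2p}{p-1}}\,dt\le C$ with $\frac{2p}{p-1}>1$, so H\"older on $[0,T]$ controls the $L^{1}_{t}$ norm. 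Gronwall's inequality then yields $\|\Gamma(t)\|_{L^{p}}\le C$ on $[0,T]$, whence $\|\nabla u(t)\|_{L^{p}}\lesssim\|\Gamma(t)\|_{L^{p}}+\|\tau(t)\|_{L^{p}}\le C$ uniformly in $t$, and finally $\int_{0}^{T}\|\nabla u\|_{L^{p}}^{q}\,dt\le CT<\infty$ for every $2<q<\infty$.
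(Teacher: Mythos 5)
Your argument reaches the conclusion by a genuinely different route from the paper, and it is essentially viable --- indeed it proves something stronger than (\ref{IMt01}). The paper never touches the commutator at this stage: it writes the velocity equation in mild form,
$\nabla u(t)=e^{t\Delta}\nabla u_{0}-\int_{0}^{t}e^{(t-s)\Delta}\Delta\,(-\Delta)^{-1}\nabla\big({\rm div}+\nabla\widetilde{\mathcal{R}}\big)(\tau-u\otimes u)\,ds$,
and invokes maximal $L^{q}_{t}L^{p}_{x}$ regularity for the heat semigroup (Proposition \ref{Pr1}) together with Calder\'on--Zygmund boundedness of the zeroth-order multiplier, so that (\ref{IMt01}) follows directly from $\|u\|_{L^{\infty}_{T}L^{2p}_{x}}\leq C$ and $\|\tau\|_{L^{\infty}_{T}L^{p}_{x}}\leq C$ --- exactly the consequences of Lemma \ref{L32} and $\|\Gamma\|_{L^{2}}\leq C$ that you also use. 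Your route instead runs an $L^{p}$ energy estimate on the $\Gamma$-equation and delivers the stronger uniform bound $\omega\in L^{\infty}([0,T];L^{p})$, whereas the paper only obtains the space-time bound here (and handles $\nabla\Gamma$ separately in Lemma \ref{LIM02} by the same Duhamel device). The paper's approach buys softness: it needs only a textbook maximal-regularity proposition plus $L^{p}$ boundedness of Riesz-type operators; yours buys a pointwise-in-time conclusion at the price of harder harmonic analysis.

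That price is the one step you should not wave through: the estimate $\|[\mathcal{R},\,u\cdot\nabla]\tau\|_{L^{p}}\lesssim\|\nabla u\|_{L^{p_{1}}}\|\tau\|_{L^{p_{2}}}$ is not an off-the-shelf Calder\'on--Zygmund fact. The operator $\mathcal{R}$ is a degree-zero multiplier singular at the origin, and in the paraproduct decomposition the pieces without commutation cancellation (the $T_{\tau}u$-type term and the remainder) do not automatically close in terms of $\nabla u$ alone; this is precisely why the $H^{s}$ analogue (\ref{t207}) of Hmidi--Keraani--Rousset carries the extra lower-order term $\|u\|_{L^{2}}\|\tau\|_{L^{2}}$. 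Using $\nabla\cdot u=0$ to rewrite $[\mathcal{R},\,u\cdot\nabla]\tau=[\mathcal{R}\,\partial_{j},\,u_{j}]\tau$ reduces matters to a Calder\'on-commutator (Kato--Ponce/Coifman--Meyer type) bilinear estimate for the first-order multiplier $\mathcal{R}\,{\rm div}$, which is true in the stated range but requires a proof or a precise citation, and you should in any case allow a lower-order correction such as $\|u\|_{L^{p_{1}}}\|\tau\|_{L^{p_{2}}}$; this costs nothing, since $\|u(t)\|_{L^{r}}\leq C$ for every $2\leq r<\infty$ follows from the energy estimate and $\|\omega\|_{L^{2}}\leq C$. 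With that lemma supplied, the rest of your scheme is sound: the appeal to the remark after Lemma \ref{L32} for the case $\alpha=\eta=0$ is legitimate, the transport and diffusion terms are handled correctly in the $L^{p}$ estimate, and the time integrability of $\|\omega\|_{L^{2p}}$ extracted from (\ref{t202}) via H\"older on $[0,T]$ closes the Gronwall argument.
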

To prove this lemma, we recall some properties involving the heat
operator. Now we set ($n$ is the space dimension)
$$e^{t\Delta}f=H(t,x)\ast f,\quad H(t,x)=(4\pi
t)^{-\frac{n}{2}}e^{-\frac{|x|^{2}}{4t}}.$$ The following estimate
is a direct consequence of the Young inequality.
\begin{eqnarray}\label{IMt02}\|\nabla^{k}e^{t\Delta}f\|_{L^{q}}\leq C
t^{-\frac{k}{2}-\frac{n}{2}(\frac{1}{p}-\frac{1}{q})}
\|f\|_{L^{p}},\end{eqnarray}
 for every
$1\leq p\leq q\leq\infty$ and integer $k\geq0$.\\
To prove the lemma, we also need the following Maximal $L_t^{q}L_x^{p}$
regularity for the heat kernel. The proof is omitted here and we can
find the detailed proof in \cite{LPG} for example.
\begin{Pros}\label{Pr1}
The operator $A$ defined by
$$Af(x,t)\triangleq \int_{0}^{t}{e^{(t-s)\Delta}\Delta f(s,x)\,ds}$$
is bounded from $L^{p}(0,T; L^{q}(\mathbb{R}^{n}))$ to $L^{p}(0,T;
L^{q}(\mathbb{R}^{n}))$ for very $(p,q)\in (1,\infty)\times
(1,\infty)$ and $T\in(0,\infty]$.
\end{Pros}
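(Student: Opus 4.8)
The plan is to recognize $A$ as the source-to-second-derivative map of the inhomogeneous heat equation and then to prove the bound by operator-valued harmonic analysis. First I would reformulate: setting $v(t)=\int_{0}^{t}e^{(t-s)\Delta}f(s)\,ds$, the mild solution of $\partial_{t}v-\Delta v=f$ with $v(0)=0$, and using $e^{(t-s)\Delta}\Delta=\Delta e^{(t-s)\Delta}$, one has $Af=\Delta v$, so the claim is exactly the maximal regularity estimate $\|\Delta v\|_{L^{p}(0,T;L^{q})}\lesssim\|f\|_{L^{p}(0,T;L^{q})}$. Next I would reduce to $T=\infty$: since $A$ is causal (a Volterra operator, the $s$-integral running only over $s<t$), extending $f$ by zero outside $(0,T)$ and restricting $Af$ back to $(0,T)$ shows that boundedness on $(0,\infty)$ yields boundedness on $(0,T)$ with the \emph{same} constant, uniformly in $T\in(0,\infty]$. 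Hence it suffices to prove that $A$ is bounded on $L^{p}(\mathbb{R}_{+};X)$ with $X=L^{q}(\mathbb{R}^{n})$.

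Taking the Fourier transform in the time variable turns $A$ into the operator-valued Fourier multiplier with symbol $m(\xi)=\Delta(i\xi-\Delta)^{-1}\in\mathcal{B}(X)$, since $\widehat{v}(\xi)=(i\xi-\Delta)^{-1}\widehat{f}(\xi)$. The plan is then to apply Weis' operator-valued multiplier theorem: if $X$ is a UMD space and the families $\{m(\xi):\xi\neq0\}$ and $\{\xi m'(\xi):\xi\neq0\}$ are $\mathcal{R}$-bounded in $\mathcal{B}(X)$, then $m$ is a bounded Fourier multiplier on $L^{p}(\mathbb{R};X)$ for every $1<p<\infty$. Because $L^{q}(\mathbb{R}^{n})$ is UMD for $1<q<\infty$, the whole problem collapses to the $\mathcal{R}$-boundedness conditions. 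Writing $m(\xi)=-I+i\xi(i\xi-\Delta)^{-1}$ and computing $\xi m'(\xi)=\lambda(\lambda-\Delta)^{-1}-\big(\lambda(\lambda-\Delta)^{-1}\big)^{2}$ with $\lambda=i\xi$, one sees that both $m(\xi)$ and $\xi m'(\xi)$ are built from the purely imaginary resolvent family $\{\lambda(\lambda-\Delta)^{-1}:\lambda\in i\mathbb{R}\setminus\{0\}\}$ (using that products of $\mathcal{R}$-bounded families are $\mathcal{R}$-bounded), so everything reduces to the $\mathcal{R}$-boundedness of this one family on $L^{q}(\mathbb{R}^{n})$.

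This last point is the technical heart and the main obstacle. Each $\lambda(\lambda-\Delta)^{-1}$ is the spatial Fourier multiplier with symbol $\sigma_{\lambda}(\eta)=\lambda/(\lambda+|\eta|^{2})$; for $\lambda=i\tau\in i\mathbb{R}$ one has $|\lambda+|\eta|^{2}|=\sqrt{|\eta|^{4}+\tau^{2}}\geq\max(|\eta|^{2},|\tau|)$, which makes the symbols together with the rescaled derivatives $|\eta|^{|\beta|}\partial_{\eta}^{\beta}\sigma_{\lambda}$ up to order $\lfloor n/2\rfloor+1$ bounded uniformly in $\lambda$. I would then upgrade these uniform Mikhlin bounds to $\mathcal{R}$-boundedness on $L^{q}$: a family of Fourier multipliers whose symbols obey a common Mikhlin estimate is automatically $\mathcal{R}$-bounded on $L^{q}(\mathbb{R}^{n})$, which follows from the Kahane contraction principle together with the randomized (vector-valued) Mikhlin theorem on the UMD space $L^{q}$. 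Feeding this back into Weis' theorem yields boundedness of $A$ on $L^{p}(\mathbb{R}_{+};L^{q})$ for all $1<p,q<\infty$, and the causality reduction gives the stated uniform bound on every $(0,T)$.

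For orientation, the diagonal case $p=q$ admits a hands-on proof that I would keep in mind as a sanity check: there $L^{p}(0,T;L^{p})$ is a space-time Lebesgue space and $A$ is a scalar convolution operator with kernel $\Delta_{x}H(t-s,x-y)\mathbf{1}_{\{s<t\}}$, a Calder\'on--Zygmund kernel for the parabolic metric $d((t,x),(s,y))=|t-s|^{1/2}+|x-y|$ of homogeneous dimension $n+2$. Here $L^{2}$-boundedness is immediate from Plancherel, since for fixed spatial frequency $\xi$ the time-kernel $|\xi|^{2}e^{-\sigma|\xi|^{2}}\mathbf{1}_{\{\sigma>0\}}$ has $L^{1}_{\sigma}$-norm equal to $1$, and the parabolic H\"ormander condition upgrades this to all $1<p<\infty$. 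Passing from $p=q$ to genuine mixed norms $p\neq q$, however, again requires the vector-valued machinery above, which is exactly why I would organize the whole argument around Weis' theorem from the outset.
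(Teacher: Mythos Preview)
The paper does not supply its own proof of this proposition: it explicitly says ``The proof is omitted here and we can find the detailed proof in \cite{LPG} for example.'' So there is no argument in the paper to compare against beyond a citation to Lemari\'e--Rieusset's monograph.

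Your proposal is a correct and self-contained route. The reformulation $Af=\Delta v$ with $v$ the mild solution of $\partial_t v-\Delta v=f$, the causality reduction to $T=\infty$, and the identification of the time-Fourier symbol $m(\xi)=\Delta(i\xi-\Delta)^{-1}$ are all standard and accurate; your algebra $\xi m'(\xi)=\lambda(\lambda-\Delta)^{-1}-\big(\lambda(\lambda-\Delta)^{-1}\big)^2$ with $\lambda=i\xi$ is right. Invoking Weis' operator-valued Mikhlin theorem on the UMD space $L^q(\mathbb{R}^n)$ then reduces matters to $\mathcal{R}$-boundedness of $\{\lambda(\lambda-\Delta)^{-1}:\lambda\in i\mathbb{R}\setminus\{0\}\}$, and your verification of this via uniform Mikhlin bounds on the scalar symbols $\sigma_\lambda(\eta)=\lambda/(\lambda+|\eta|^2)$ is the usual way to check $\mathcal{R}$-sectoriality of the Laplacian on $L^q$. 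The diagonal $p=q$ sanity check through parabolic Calder\'on--Zygmund theory is also accurate.

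As for the comparison: the reference the paper cites treats maximal regularity for the heat kernel through more classical harmonic-analytic means (parabolic singular integrals and real-variable methods in mixed-norm spaces), rather than through the abstract Weis/$\mathcal{R}$-boundedness machinery you use. Your approach is more conceptual and generalizes immediately to any $\mathcal{R}$-sectorial generator on a UMD space; the classical route is more hands-on and avoids the overhead of $\mathcal{R}$-boundedness, at the cost of being tied to the specific kernel. Either way, for the purposes of this paper the proposition is simply quoted as a black box, so your argument goes well beyond what the authors supply.
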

\begin{proof}[\textbf{Proof of  Lemma \ref{LIM01}}]
Now let us start to prove Lemma \ref{LIM01} with the help of Proposition \ref{Pr1}.
Thanks to the divergence-free condition, we can deduce from the first equation of (\ref{OLD}) that
$$-\pi=\frac{{\rm div}{\rm div}}{-\Delta}\big(\tau-u\otimes u\big)\triangleq
\mathcal {\widetilde{R}}\big(\tau-u\otimes u\big).$$ Hence, the
first equation of (\ref{OLD}) can be rewritten as
\begin{eqnarray}
\partial_{t}u-\Delta u=\big({\rm div}+\nabla \mathcal {\widetilde{R}}\big)
\big(\tau-u\otimes u\big).\nonumber\end{eqnarray}
Applying operator $\nabla$ to above equality, one arrives at
\begin{eqnarray}\label{IMt03}
\partial_{t}\nabla u-\Delta \nabla u=\nabla\big({\rm div}+\nabla
\mathcal {\widetilde{R}}\big)
\big(\tau-u\otimes u\big).\end{eqnarray}
By Duhamel's Principle, the
velocity $\nabla u$ can be solved by
\begin{eqnarray}\label{IMt04}
\nabla u(x,t)=e^{t\Delta}\nabla
u_{0}(x)-\int_{0}^{t}{e^{(t-s)\Delta}
\Delta\underbrace{(-\Delta)^{-1}\nabla\big({\rm div}+\nabla \mathcal
{\widetilde{R}}\big)\big(\tau-u\otimes u\big)}(x,s)\,ds}.
\end{eqnarray}
Note that $\|\Gamma\|_{L^{2}}\leq C$ and $\Gamma=\omega-\mathcal {R}\tau$, we can verify that
$$\|\omega\|_{L^{2}}\leq C,$$
which together with the basic energy estimate (\ref{t201}) implies
$$\|u\|_{L^{p}}\leq C,\quad \mbox{for any }\,\,2<p<\infty.$$
By (\ref{IMt02}) and Proposition \ref{Pr1}, one can conclude from (\ref{IMt04}) that
\begin{eqnarray}&&
\|\nabla u\|_{L_{T}^{q}L_{x}^{p}}\nonumber\\&\leq&
\|e^{t\Delta}\nabla u_{0}\|_{L_{T}^{q}L_{x}^{p}}+
\big\|\int_{0}^{t}{e^{(t-s)\Delta}\Delta\underbrace{(-\Delta)^{-1}
\nabla\big({\rm div}+\nabla \mathcal
{\widetilde{R}}\big)\big(\tau-u\otimes
u\big)}(x,s)\,ds}\big\|_{L_{T}^{q}L_{x}^{p}} \nonumber\\&\leq&
C\|H(t,x)\|_{L_{T}^{q}L_{x}^{1}}\|\nabla u_{0}\|_{L_{x}^{p}}+C
\big\|{(-\Delta)^{-1}\nabla\big({\rm div}+\nabla
\mathcal {\widetilde{R}}\big)\big(\tau-u\otimes u\big)}\big\|_{L_{T}^{q}L_{x}^{p}}\nonumber\\
&\leq& C\|\nabla u_{0}\|_{L_{x}^{p}}+C
\big\|\tau-u\otimes u\big\|_{L_{T}^{q}L_{x}^{p}}\nonumber\\
&\leq& C+CT^{\frac{1}{q}}(\|u \|_{L_{T}^{\infty}L_{x}^{2p}}^{2}+\|\tau \|_{L_{T}^{\infty}L_{x}^{p}})\nonumber\\ &\leq& C<\infty,\nonumber
\end{eqnarray}
where we have used the boundedness of the Calderon-Zygmund operator
between the $L^{p}$ ($1<p<\infty$) space in the third inequality.\\
Thus, this concludes the proof of Proposition \ref{Pr1}.
\end{proof}
The next lemma is concerned with the quantity $\Gamma$ which reads as
\begin{lemma}\label{LIM02}
For any corresponding
solution $(u, \tau)$ of (\ref{Open}) and any $2<p,\,\,q<\infty$, there exist some constants $C$
such that for any $T>0$
\begin{eqnarray}\label{IMt05}
\int_{0}^{T}{\|\nabla \Gamma(t)\|_{L^{p}}^{q}\,dt}\leq C<\infty.
\end{eqnarray}
\end{lemma}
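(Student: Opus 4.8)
The plan is to derive the evolution equation for $\nabla\Gamma$ from the equation for $\Gamma$ (namely (\ref{t204})) and then apply the maximal $L_t^qL_x^p$ regularity for the heat operator provided by Proposition \ref{Pr1}, exactly in the spirit of the proof of Lemma \ref{LIM01}. Recall that $\Gamma=\omega-\mathcal{R}\tau$ satisfies the heat equation with the right-hand side
\begin{eqnarray}
\partial_{t}\Gamma-\Delta\Gamma=-(u\cdot\nabla)\Gamma+\mathcal{R}\Lambda^{2\alpha}\tau-\mathcal{R}\mathcal{D}u+[\mathcal{R},\,u\cdot\nabla]\tau+\mathcal{R}(\tau\Omega-\Omega\tau),\nonumber
\end{eqnarray}
and in the case $\alpha=0$ of (\ref{Open}) the term $\mathcal{R}\Lambda^{2\alpha}\tau$ becomes simply $\mathcal{R}\tau$ and the corotational term is absent, so the forcing simplifies considerably. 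First I would apply $\nabla$ to this equation, use Duhamel's principle to write $\nabla\Gamma$ as the sum of the homogeneous part $e^{t\Delta}\nabla\Gamma_0$ plus a Duhamel integral, and then estimate the $L_T^qL_x^p$ norm of each piece. The homogeneous part is controlled by (\ref{IMt02}) and the integrability of $H(t,x)$ in time, exactly as in Lemma \ref{LIM01}.

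The main work lies in bounding the Duhamel contributions coming from the forcing terms. The transport term $-(u\cdot\nabla)\Gamma=-\nabla\cdot(u\Gamma)$ (using $\nabla\cdot u=0$) can be written in divergence form, so that after applying $\nabla(-\Delta)^{-1}$ one recovers a Calder\'on--Zygmund operator acting on $u\Gamma$; by H\"older and the bounds $\|u\|_{L^{p}}\leq C$, $\|\Gamma\|_{L^{2}}\leq C$ together with the improved integrability of $\nabla u$ from Lemma \ref{LIM01} one should control $\|u\Gamma\|_{L_T^qL_x^p}$. The linear terms $\mathcal{R}\tau$ and $\mathcal{R}\mathcal{D}u$ are handled by the $L^p$-boundedness of the Riesz-type operator $\mathcal{R}$ together with the $L^r$ bounds on $\tau$ from Lemma \ref{L32} and the $L^p$ bounds on $\nabla u$ from Lemma \ref{LIM01}. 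The commutator term $[\mathcal{R},\,u\cdot\nabla]\tau$ is the delicate one and I would treat it via the commutator estimate (\ref{t207}) of Hmidi--Keraani--Rousset, or by writing it again in a form amenable to maximal regularity.

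The hard part will be the commutator term $[\mathcal{R},\,u\cdot\nabla]\tau$: one must estimate its $L_T^qL_x^p$ norm rather than the $L^2$-based norm used in (\ref{t208}), so the commutator estimate must be upgraded to $L^p$ spaces and matched with the available bounds on $u$ and $\tau$. I expect that choosing the exponents carefully---balancing $\|\nabla u\|_{L^{p_1}}$ against a suitable Besov norm of $\tau$ via the embedding $L^{r}\hookrightarrow B_{\infty,2}^{(-r-2)/2r}$ of Lemma \ref{LIM03}, and exploiting the freedom in $p,q$ together with a power of $T$---will allow absorption of all terms, yielding $\|\Gamma-\mathcal{R}\tau\|$-type quantities that are already controlled. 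Once all pieces are collected, Proposition \ref{Pr1} converts the Duhamel integral bound into the desired estimate
\begin{eqnarray}
\|\nabla\Gamma\|_{L_T^qL_x^p}\leq C\|\nabla\Gamma_0\|_{L_x^p}+C\big\|\text{(forcing)}\big\|_{L_T^qL_x^p}\leq C<\infty,\nonumber
\end{eqnarray}
which is precisely (\ref{IMt05}). Throughout I would rely only on the a priori bounds already established in Lemmas \ref{L31}, \ref{L32} and \ref{LIM01}, so no new estimate on higher derivatives of $\tau$ is needed.
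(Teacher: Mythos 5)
Your proposal is correct and takes essentially the same route as the paper: apply $\nabla$ to the $\Gamma$ equation with the transport term in divergence form $\nabla\cdot(u\Gamma)$, use Duhamel's principle together with the maximal regularity of Proposition \ref{Pr1}, and handle the commutator in $L^{p}$ exactly as the paper does, namely via the embedding $B_{2,2}^{1-\frac{2}{p}}\hookrightarrow B_{p,2}^{0}\hookrightarrow L^{p}$ so that the $H^{s}$ commutator estimate (\ref{t207}) combined with Lemma \ref{LIM03} applies. The only slip is cosmetic: by the paper's convention the system (\ref{Open}) has no stress dissipation at all, so the forcing contains no $\mathcal{R}\Lambda^{2\alpha}\tau=\mathcal{R}\tau$ term (compare (\ref{IMt06})), which would in any case be harmless since $\mathcal{R}\tau$ is bounded in $L_{T}^{q}L_{x}^{p}$ by Lemma \ref{L32}.
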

\begin{proof}[\textbf{Proof of  Lemma \ref{LIM02}}]
By Lemma \ref{LIM01}, we conclude the following estimate
$$\|\Gamma\|_{L_{T}^{q}L_{x}^{p}}\leq C\|\omega\|_{L_{T}^{q}L_{x}^{p}}+C\|\tau\|_{L_{T}^{q}L_{x}^{p}}\leq C,\quad \mbox{for any }\,\,2<p,\,\,q<\infty.$$
In the case $\alpha=\eta=0$, the combined quantity $\Gamma$ satisfies
\begin{eqnarray}\partial_{t}\Gamma+(u\cdot\nabla)\Gamma-\Delta \Gamma
=[\mathcal {R},\,u\cdot\nabla]\tau-\mathcal {R}\mathcal
{D}u.\nonumber\end{eqnarray} Taking into account the divergence-free
condition and applying operator $\nabla$, we thus get
\begin{eqnarray}\label{IMt06}\partial_{t}\nabla\Gamma-\Delta \nabla\Gamma =
\nabla\Big(\nabla\cdot[\mathcal {R},\,u]\tau-\mathcal {R}\mathcal
{D}u-\nabla\cdot(u\Gamma)\Big).\end{eqnarray} Again using Duhamel's
Principle, we can rewrite $\nabla\Gamma$ as follows
\begin{eqnarray}
\nabla\Gamma(x,t)=e^{t\Delta}\nabla\Gamma_{0}(x)-\int_{0}^{t}{e^{(t-s)\Delta}
\Delta\underbrace{(-\Delta)^{-1}\nabla\Big(\nabla\cdot[\mathcal
{R},\,u]\tau-\mathcal {R}\mathcal
{D}u-\nabla\cdot(u\Gamma)}\Big)(x,s)\,ds}.\nonumber
\end{eqnarray}
By means of (\ref{IMt02}) as well as  Proposition \ref{Pr1} again,
it follows that
\begin{eqnarray}&&
\|\nabla \Gamma\|_{L_{T}^{q}L_{x}^{p}}\nonumber\\&\leq&
\|e^{t\Delta}\nabla \Gamma_{0}\|_{L_{T}^{q}L_{x}^{p}}+
\big\|\int_{0}^{t}{e^{(t-s)\Delta}\Delta\underbrace{(-\Delta)^{-1}\nabla\Big(\nabla\cdot[\mathcal
{R},\,u]\tau-\mathcal {R}\mathcal
{D}u-\nabla\cdot(u\Gamma)}\Big)(x,s)\,ds}\big\|_{L_{T}^{q}L_{x}^{p}}
\nonumber\\&\leq& C\|H(t,x)\|_{L_{T}^{q}L_{x}^{1}}\|\nabla
\Gamma_{0}\|_{L_{x}^{p}}+C
\big\|(-\Delta)^{-1}\nabla\Big(\nabla\cdot[\mathcal
{R},\,u]\tau-\mathcal {R}\mathcal
{D}u-\nabla\cdot(u\Gamma)\Big)\big\|_{L_{T}^{q}L_{x}^{p}}\nonumber\\
&\leq& C\|\nabla \Gamma_{0}\|_{L_{x}^{p}}+C \big\|[\mathcal
{R},\,u]\tau\big\|_{L_{T}^{q}L_{x}^{p}}+C
\big\|u\Gamma\big\|_{L_{T}^{q}L_{x}^{p}}+C
\big\|u\big\|_{L_{T}^{q}L_{x}^{p}}\nonumber\\
&\leq& C+CT^{\frac{1}{q}}\big\|[\mathcal {R},\,u]\tau\big\|_{L_{T}^{\infty}L_{x}^{p}}+C\|u \|_{L_{T}^{\infty}L_{x}^{2p}}\|\Gamma \|_{L_{T}^{q}L_{x}^{2p}}+CT^{\frac{1}{q}}\|u \|_{L_{T}^{\infty}L_{x}^{p}}\nonumber\\ &\leq& C<\infty,\nonumber
\end{eqnarray}
where we have used the following two facts:\\
(1) the boundedness of the Calderon-Zygmund operator
between the $L^{p}$ space for any $1<p<\infty$.\\
(2) the following estimate
\begin{eqnarray}
\big\|[\mathcal {R},\,u]\tau\big\|_{L^{p}}&\leq& C\big\|[\mathcal {R},\,u]\tau\big\|_{B_{p,\,2}^{0}}\quad \Big(B_{p,\,2}^{0}\hookrightarrow L^{p}\,\,\,\mbox{for}\,\,\,2\leq p<\infty \Big)
\nonumber\\ &\leq&
C\big\|[\mathcal {R},\,u]\tau\big\|_{B_{2,\,2}^{1-\frac{2}{p}}}\nonumber\\ &\approx&
\big\|[\mathcal {R},\,u]\tau\big\|_{H^{1-\frac{2}{p}}}
\nonumber\\ &\leq&
C(\|\nabla
u\|_{L^{2}}\|\tau\|_{B_{\infty,2}^{\frac{-2}{p}}}+\|
u\|_{L^{2}}\|\tau\|_{L^{2}})\nonumber\\ &\leq&
C(\|w\|_{L^{2}}\|\tau\|_{L^{p+\epsilon}}+\|
u\|_{L^{2}}\|\tau\|_{L^{2}})
\nonumber\\ &\leq&C,
\end{eqnarray}
where we used $L^{p+\epsilon}(\mathbb{R}^{2})\hookrightarrow
B_{\infty,2}^{\frac{-2}{p}}(\mathbb{R}^{2})$ (see Lemma \ref{LIM03}) for some $0<\epsilon<\infty$.\\
Therefore we conclude the proof of Lemma \ref{LIM02}.
\end{proof}
Finally, we state an embedding inequality which has been used in above proof.
\begin{lemma}\label{LIM03}
 Let $s>\frac{2}{p}$ with $1<p\leq \infty$. Then the following embedding inequality holds
\begin{eqnarray}
L^{p}(\mathbb{R}^{2})\hookrightarrow
B_{\infty,r}^{-s}(\mathbb{R}^{2}),\quad \mbox{for}\,\,\,1\leq r\leq\infty.
\end{eqnarray}
\end{lemma}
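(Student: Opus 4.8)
The plan is to prove this embedding directly from the Littlewood-Paley characterization of the Besov norm, combined with Bernstein's inequality. First I would fix the standard dyadic blocks $\{\Delta_{j}\}_{j\geq-1}$ and recall that
$$\|f\|_{B_{\infty,r}^{-s}}=\big\|\big(2^{-js}\|\Delta_{j}f\|_{L^{\infty}}\big)_{j\geq-1}\big\|_{\ell^{r}}.$$
In this way the whole statement reduces to controlling the single-block quantity $\|\Delta_{j}f\|_{L^{\infty}}$ by $\|f\|_{L^{p}}$ with a favourable power of $2^{j}$, after which one only has to sum in $j$.

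The key step is Bernstein's inequality in dimension two. Since $\Delta_{j}f$ has frequency support in a ball of radius $\sim 2^{j}$, one has $\|\Delta_{j}f\|_{L^{\infty}}\leq C\,2^{2j/p}\|\Delta_{j}f\|_{L^{p}}$ for every $j\geq-1$. Combining this with the uniform boundedness of the projections on $L^{p}$, namely $\|\Delta_{j}f\|_{L^{p}}\leq C\|f\|_{L^{p}}$ (valid for all $1\leq p\leq\infty$ because the convolution kernels have $L^{1}$ norm bounded independently of $j$), I would obtain
$$2^{-js}\|\Delta_{j}f\|_{L^{\infty}}\leq C\,2^{j(\frac{2}{p}-s)}\|f\|_{L^{p}}.$$

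It then remains to take the $\ell^{r}$ norm in $j$. The hypothesis $s>\frac{2}{p}$ makes the exponent $\frac{2}{p}-s$ strictly negative, so the geometric sequence $\big(2^{j(\frac{2}{p}-s)}\big)_{j\geq-1}$ belongs to $\ell^{r}$ for every $1\leq r\leq\infty$, with $\ell^{r}$ norm bounded by a constant depending only on $s,p,r$. Summing yields $\|f\|_{B_{\infty,r}^{-s}}\leq C\|f\|_{L^{p}}$, which is exactly the claimed embedding.

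I do not anticipate any serious obstacle here; the only points requiring a word of care are minor. One should note that Bernstein's inequality covers the low-frequency block $j=-1$ as well (it is supported in a ball rather than an annulus, so the same estimate applies), and one should check the borderline case $p=\infty$, where the factor $2^{2j/p}$ degenerates to $1$ while the requirement $s>0$ still guarantees summability of the series. In short, the strict inequality $s>\frac{2}{p}$ is precisely what makes the geometric series converge, and the argument is a single application of Bernstein followed by summation.
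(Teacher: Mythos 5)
Your proof is correct and follows essentially the same route as the paper: Bernstein's inequality $\|\Delta_{j}f\|_{L^{\infty}}\leq C2^{\frac{2}{p}j}\|\Delta_{j}f\|_{L^{p}}$ combined with the uniform $L^{p}$-boundedness of the dyadic blocks, followed by summation of the geometric series using $s>\frac{2}{p}$. Your remarks on the block $j=-1$ and the case $p=\infty$ are sensible additions but do not change the argument.
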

\begin{proof}[\textbf{Proof of  Lemma \ref{LIM03}}]
The classical Bernstein inequality (see for instance \cite{CM}) allows us to show that
for any $j\geq -1$
$$\|\Delta_{j}f\|_{L^{\infty}}\leq C2^{\frac{2}{p}j}\|\Delta_{j}f\|_{L^{p}}.$$
Consequently, for $1\leq r<\infty$ (indeed the case $r=\infty$ is more simpler), we have
$$\sum_{j=-1}^{\infty}2^{-jsr}\|\Delta_{j}f\|_{L^{\infty}}^{r}\leq C\sum_{j=-1}^{\infty}2^{-(s-\frac{2}{p})rj}\|\Delta_{j}f\|_{L^{p}}^{r}
\leq C\sum_{j=-1}^{\infty}2^{-(s-\frac{2}{p})rj}\|f\|_{L^{p}}^{r}.$$
According to $s>\frac{2}{p}$, one has
$$\sum_{j=-1}^{\infty}2^{-jsr}\|\Delta_{j}f\|_{L^{\infty}}^{r}\leq
C\|f\|_{L^{p}}^{r}.$$
Based on the definition of space $B_{\infty,r}^{-s}$, we conclude the proof.
\end{proof}
\vskip .2in
\subsection{The proof of Theorem \ref{Th2}}
In this subsection, we give the details of the proof of Theorem \ref{Th2}. In this subsection, we only consider the case
$1<\gamma\leq \frac{4}{3}$.
As a matter of fact, it is strongly
believed that the diffusion term is always good term and the larger the
power $\gamma$ is, the better effects it produces. Therefore, the case $\gamma>1$ and arbitrarily close to one is the main focus of this paper.
We remark that the case $\gamma>\frac{4}{3}$ is even more easier.
To begin with, we can easily derive from the system (\ref{FROLD}) that
\begin{lemma}\label{FRL31}
For any corresponding solution $(u, \tau)$ of (\ref{FROLD}), there
exist some constants $C$ such that for any $T>0$
\begin{eqnarray}\label{FRt201}
\|u(t)\|_{L^{2}}^{2}+\|\tau(t)\|_{L^{2}}^{2}+\int_{0}^{T}{\|\Lambda^{\gamma}
u(s)\|_{L^{2}}^{2}\,ds}\leq C<\infty
\end{eqnarray}
for any $t\in[0, T].$
\end{lemma}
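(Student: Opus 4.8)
The plan is to reproduce verbatim the energy argument of Lemma \ref{L31}, the only structural differences being that the velocity dissipation is now $\|\Lambda^{\gamma}u\|_{L^{2}}^{2}$ and that the stress equation $(\ref{FROLD})_{2}$ carries no dissipative term at all. First I would pair $(\ref{FROLD})_{1}$ with $u$ and $(\ref{FROLD})_{2}$ with $\tau$ in $L^{2}(\mathbb{R}^{2})$. In the momentum equation the convective term drops by incompressibility, $\int (u\cdot\nabla)u\cdot u\,dx=0$, the pressure term vanishes since $\int \nabla\pi\cdot u\,dx=-\int \pi\,\nabla\cdot u\,dx=0$, and the fractional dissipation contributes the good term $\int (-\Delta)^{\gamma}u\cdot u\,dx=\|\Lambda^{\gamma}u\|_{L^{2}}^{2}$ by Plancherel and the self-adjointness of $\Lambda^{\gamma}$. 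This yields
\begin{eqnarray}
\frac{1}{2}\frac{d}{dt}\|u\|_{L^{2}}^{2}+\|\Lambda^{\gamma}u\|_{L^{2}}^{2}=\int_{\mathbb{R}^{2}}{(\nabla\cdot\tau)\cdot u\,dx}.\nonumber
\end{eqnarray}

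In the stress equation the convective term again vanishes by incompressibility, and the corotational contribution is annihilated by the algebraic cancellation $\int_{\mathbb{R}^{2}}(\tau\Omega-\Omega\tau):\tau\,dx=0$, which is exactly identity (\ref{tttt11}) specialized to $r=2$ and uses only the symmetry of $\tau$. Hence
\begin{eqnarray}
\frac{1}{2}\frac{d}{dt}\|\tau\|_{L^{2}}^{2}=\int_{\mathbb{R}^{2}}{\mathcal {D}u:\tau\,dx}.\nonumber
\end{eqnarray}
Adding the two identities and invoking the coupling cancellation already recorded in the proof of Lemma \ref{L31}, namely $\int_{\mathbb{R}^{2}}(\nabla\cdot\tau)\cdot u\,dx+\int_{\mathbb{R}^{2}}\mathcal {D}u:\tau\,dx=0$ (integrate by parts and use $\tau=\tau^{\top}$ so that $\nabla u$ may be replaced by $\mathcal{D}u$), all the right-hand sides cancel and I am left with the closed differential identity
\begin{eqnarray}
\frac{1}{2}\frac{d}{dt}\big(\|u\|_{L^{2}}^{2}+\|\tau\|_{L^{2}}^{2}\big)+\|\Lambda^{\gamma}u\|_{L^{2}}^{2}=0.\nonumber
\end{eqnarray}
Integrating in time from $0$ to $t$ then gives $\|u(t)\|_{L^{2}}^{2}+\|\tau(t)\|_{L^{2}}^{2}+2\int_{0}^{t}\|\Lambda^{\gamma}u\|_{L^{2}}^{2}\,ds=\|u_{0}\|_{L^{2}}^{2}+\|\tau_{0}\|_{L^{2}}^{2}$, which is precisely (\ref{FRt201}).

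Since every step is a cancellation, there is no genuine obstacle here; the estimate is exactly as soft as the one in Lemma \ref{L31}. The only point worth flagging is structural rather than technical: because $(\ref{FROLD})_{2}$ carries no dissipation, this argument produces no space-time bound on any positive-order norm of $\tau$, which is why the statement asks only for control of $\int_{0}^{T}\|\Lambda^{\gamma}u\|_{L^{2}}^{2}\,ds$. This missing stress dissipation is what will force the subsequent higher-order estimates for Theorem \ref{Th2} to extract all of their smoothing from the velocity equation, and it is the reason the regime $\gamma>1$ (with the focus on $1<\gamma\le\frac{4}{3}$) is the relevant one: one needs the velocity dissipation to be strong enough to absorb the stress-driven forcing in the vorticity and in the higher Sobolev estimates.
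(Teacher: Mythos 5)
Your proof is correct and follows exactly the route the paper intends: Lemma \ref{FRL31} is stated there without proof precisely because it is the same energy identity as Lemma \ref{L31}, relying on the two cancellations $\int_{\mathbb{R}^{2}}(\nabla\cdot\tau)\cdot u\,dx+\int_{\mathbb{R}^{2}}\mathcal{D}u:\tau\,dx=0$ and $\int_{\mathbb{R}^{2}}(\tau\Omega-\Omega\tau):\tau\,dx=0$, with the only change being the dissipative term $\|\Lambda^{\gamma}u\|_{L^{2}}^{2}$ and the absence of stress dissipation. Your closing remark about the missing smoothing for $\tau$ correctly identifies why the subsequent lemmas must draw all regularity from the velocity equation, consistent with the paper's treatment.
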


Apply operator ${\rm curl}$
to the equation $(\ref{FROLD})_{1}$ to obtain the vorticity $\omega$
equation as follows
\begin{eqnarray}\label{FRt203}\partial_{t}\omega+(u\cdot\nabla)\omega+\Lambda^{2\gamma} \omega={\rm curl \,div } (\tau).\end{eqnarray}
Taking the operator
$\mathcal {R}_{\gamma}=\Lambda^{-2\gamma}{\rm curl \,div }$
and applying the operator $\mathcal {R}_{\gamma}$ to equation $(\ref{FROLD})_{2}$, we have
\begin{eqnarray}\label{FRZZtt}\partial_{t}\mathcal {R}_{\gamma}\tau+(u\cdot\nabla)\mathcal {R}_{\gamma}\tau=\mathcal {R}_{\gamma}\mathcal {D}u-[\mathcal {R}_{\gamma},\,u\cdot\nabla]\tau-\mathcal {R}_{\gamma}(\tau\Omega-\Omega\tau).\end{eqnarray}
Denoting the combined quantity $G=\omega-\mathcal {R}_{\gamma}\tau$, it is easy to verify that $G$ obeys
\begin{eqnarray}\label{FRt204}\partial_{t}G+(u\cdot\nabla)G+\Lambda^{2\gamma} G
=[\mathcal {R}_{\gamma},\,u\cdot\nabla]\tau+ \mathcal
{R}_{\gamma}(\tau\Omega-\Omega\tau)-\mathcal {R}_{\gamma}\mathcal
{D}u.\end{eqnarray}

Now we can conclude the following result.
\begin{lemma}\label{FRL32}
Assume that $1<\gamma\leq\frac{4}{3}$ and any $2<r<\infty$. For any corresponding
solution $(u, \tau)$ of (\ref{FROLD}), there exist some constants $C$
such that for any $T>0$
\begin{eqnarray}\label{FRt202}
\|G(t)\|_{L^{2}}^{2}+\|\tau(t)\|_{L^{r}}^{2}+\int_{0}^{T}{\|\Lambda^{\gamma}G(s)
\|_{L^{2}}^{2}\,ds}\leq C<\infty
\end{eqnarray}
for any $t\in[0, T].$
\end{lemma}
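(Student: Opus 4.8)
The plan is to reproduce the two-tier scheme of Lemma \ref{L32}: an $L^{2}$ bound for the combined quantity $G=\omega-\mathcal{R}_{\gamma}\tau$ coupled to an $L^{r}$ bound for $\tau$, closed by a single Gronwall argument on $\|G\|_{L^{2}}^{2}+\|\tau\|_{L^{r}}^{2}$. The decisive structural gain over the main text is that $\mathcal{R}_{\gamma}=\Lambda^{-2\gamma}{\rm curl\,div}$ now has negative order $2-2\gamma$, hence smooths by $2(\gamma-1)$ derivatives (one may write $\mathcal{R}_{\gamma}=\Lambda^{2-2\gamma}T_{m}$ with $T_{m}$ an order-zero Calder\'on--Zygmund operator), while the dissipation $\Lambda^{2\gamma}G$ controls $\nabla G$ because $\gamma>1$. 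Throughout I use only the $r$-independent energy inputs of Lemma \ref{FRL31}; in particular $\|\omega\|_{L^{2}}\lesssim\|\Lambda^{\gamma}u\|_{L^{2}}^{1/\gamma}$ gives $\|\omega\|_{L^{2}}\in L^{2\gamma}_{t}\subset L^{1}_{t}$.

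First I test (\ref{FRt204}) against $G$. The transport term drops by $\nabla\cdot u=0$ and the dissipation produces $\|\Lambda^{\gamma}G\|_{L^{2}}^{2}$, so that $\tfrac12\frac{d}{dt}\|G\|_{L^{2}}^{2}+\|\Lambda^{\gamma}G\|_{L^{2}}^{2}=\mathcal{N}_{1}+\mathcal{N}_{2}+\mathcal{N}_{3}$ with $\mathcal{N}_{1}=\int[\mathcal{R}_{\gamma},u\cdot\nabla]\tau\,G\,dx$, $\mathcal{N}_{2}=\int\mathcal{R}_{\gamma}(\tau\Omega-\Omega\tau)G\,dx$ and $\mathcal{N}_{3}=-\int\mathcal{R}_{\gamma}\mathcal{D}u\,G\,dx$. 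For $\mathcal{N}_{3}$ I use the identity $\mathcal{R}_{\gamma}\mathcal{D}u=-\tfrac12\Lambda^{2-2\gamma}\omega$ (since ${\rm curl\,div}\,\mathcal{D}u=\tfrac12\Delta\omega$ for divergence-free $u$), bound $\|\Lambda^{2-2\gamma}\omega\|_{L^{2}}\lesssim\|u\|_{\dot H^{3-2\gamma}}$, and interpolate $3-2\gamma\in(0,\gamma)$ between $\|u\|_{L^{2}}$ and $\|\Lambda^{\gamma}u\|_{L^{2}}$; this yields $\mathcal{N}_{3}\lesssim 1+\|\Lambda^{\gamma}u\|_{L^{2}}^{2}+\|G\|_{L^{2}}^{2}$, which is time-integrable by Lemma \ref{FRL31}. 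For $\mathcal{N}_{2}$ I pair with $G$ by Cauchy--Schwarz and exploit the smoothing via Hardy--Littlewood--Sobolev: $\|\mathcal{R}_{\gamma}(\tau\Omega-\Omega\tau)\|_{L^{2}}\lesssim\|\tau\Omega-\Omega\tau\|_{L^{a}}$ with $a=\tfrac{2}{2\gamma-1}\in(1,2)$, then H\"older $\|\tau\Omega\|_{L^{a}}\lesssim\|\tau\|_{L^{1/(\gamma-1)}}\|\omega\|_{L^{2}}$, distributing the integrability so that $\tau$ carries the high exponent $\tfrac1{\gamma-1}\ge 3$ and the velocity factor only $L^{2}$. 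Hence $\mathcal{N}_{2}\lesssim\|\omega\|_{L^{2}}(\|\tau\|_{L^{r}}^{2}+\|G\|_{L^{2}}^{2})$ with the $L^{1}_{t}$ coefficient $\|\omega\|_{L^{2}}$, provided $r\ge\tfrac1{\gamma-1}$.

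For the $L^{r}$ bound I multiply $(\ref{FROLD})_{2}$ by $|\tau|^{r-2}\tau$; the corotational term vanishes by symmetry exactly as in (\ref{tttt11}), and, there being no dissipation on $\tau$, I obtain $\frac{d}{dt}\|\tau\|_{L^{r}}\lesssim\|\mathcal{D}u\|_{L^{r}}\lesssim\|\omega\|_{L^{r}}\le\|G\|_{L^{r}}+\|\mathcal{R}_{\gamma}\tau\|_{L^{r}}$. The first term is controlled by Gagliardo--Nirenberg, $\|G\|_{L^{r}}\lesssim\|G\|_{L^{2}}^{1-\theta}\|\Lambda^{\gamma}G\|_{L^{2}}^{\theta}$ with $\theta=\tfrac{r-2}{\gamma r}<1$, whose dissipative factor is absorbed after multiplying by $\|\tau\|_{L^{r}}$ and using Young ($\theta<2$); the second is handled again by the smoothing of $\mathcal{R}_{\gamma}$, namely $\|\mathcal{R}_{\gamma}\tau\|_{L^{r}}\lesssim\|\tau\|_{L^{r_{0}}}$ with $\tfrac1{r_{0}}=\tfrac1r+\gamma-1$, and for $r$ large enough one has $r_{0}\in[2,r]$, so that $\|\tau\|_{L^{r_{0}}}$ interpolates between the energy bound $\|\tau\|_{L^{2}}$ and $\|\tau\|_{L^{r}}$. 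Adding the two estimates and invoking Lemma \ref{FRL31} yields a closed differential inequality for $\|G\|_{L^{2}}^{2}+\|\tau\|_{L^{r}}^{2}$ with $L^{1}_{t}$ coefficients; Gronwall then gives (\ref{FRt202}) for all sufficiently large $r$, and the remaining range $2<r<\tfrac1{\gamma-1}$ follows by interpolating $\|\tau\|_{L^{r}}$ between $\|\tau\|_{L^{2}}$ and $\|\tau\|_{L^{r_{*}}}$.

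The main obstacle will be the commutator term $\mathcal{N}_{1}$ together with the low-frequency singularity of $\Lambda^{-2\gamma}$. I would treat $\mathcal{N}_{1}$ as in Lemma \ref{L32}: using $\nabla\cdot u=0$ to rewrite it as $-\int[\mathcal{R}_{\gamma},u]\tau\cdot\nabla G\,dx$ and applying a commutator estimate adapted to $\mathcal{R}_{\gamma}$. Because $\mathcal{R}_{\gamma}$ smooths by $2(\gamma-1)$ derivatives, the commutator gains correspondingly more than the order-zero operator $\mathcal{R}$ of (\ref{t207}), so $\tau$ may be measured in a negative Besov space reached from $L^{r}$ by Lemma \ref{LIM03}, while $\nabla G$ is absorbed into $\|\Lambda^{\gamma}G\|_{L^{2}}$ by interpolation (here $\gamma>1$ is essential). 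The delicate point throughout is that $\mathcal{R}_{\gamma}$ is \emph{not} an $L^{p}$-bounded Calder\'on--Zygmund operator but a genuinely smoothing, negative-order operator, so every pairing must place the high Lebesgue or Besov exponent on $\tau$ and only $L^{2}$ on the velocity-type factors; checking that this distribution of integrability remains admissible is precisely where the standing hypothesis $\gamma\le\tfrac43$ (it suffices that $\gamma<\tfrac32$, so that $a=\tfrac2{2\gamma-1}>1$ and $3-2\gamma>0$) is comfortably satisfied.
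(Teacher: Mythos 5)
Your architecture coincides with the paper's: the same energy identity for $G$ obtained by testing (\ref{FRt204}) against $G$, the same $L^{r}$ estimate for $\tau$ with the corotational term killed by symmetry as in (\ref{tttt11}), and a single Gronwall argument on $\|G\|_{L^{2}}^{2}+\|\tau\|_{L^{r}}^{2}$. Your treatment of the linear term (via $\mathcal{R}_{\gamma}\mathcal{D}u=-\tfrac12\Lambda^{2-2\gamma}\omega$ and interpolation of $\dot H^{3-2\gamma}$ between $L^{2}$ and $\dot H^{\gamma}$, valid since $0<3-2\gamma<\gamma$) and your bounds for $\|G\|_{L^{r}}$ and $\|\mathcal{R}_{\gamma}\tau\|_{L^{r}}$ are correct and match the paper up to cosmetic choices. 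In the corotational term you place $\omega$ in $L^{2}$ and $\tau$ in $L^{1/(\gamma-1)}$, with the $L^{1}_{t}$ coefficient $\|\omega\|_{L^{2}}\in L^{2\gamma}_{t}$, whereas the paper places $\Omega$ in $L^{2/(2-\gamma)}$ (coefficient $\|\Lambda^{\gamma}u\|_{L^{2}}\in L^{2}_{t}$ via Sobolev embedding) and $\tau$ in $L^{2/(3(\gamma-1))}$; both H\"older splits are admissible for large $r$, and your explicit reduction of the range $2<r<\frac{1}{\gamma-1}$ to large $r$ by Lebesgue interpolation is a point the paper leaves implicit.

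The one genuine gap is the commutator term. You integrate by parts to $-\int[\mathcal{R}_{\gamma},u]\tau\cdot\nabla G\,dx$ and then invoke ``a commutator estimate adapted to $\mathcal{R}_{\gamma}$'' that you neither state nor prove; moreover, your route additionally requires absorbing $\|\nabla G\|$ into $\|\Lambda^{\gamma}G\|_{L^{2}}$, so the unproven estimate must come with precisely calibrated Sobolev--Besov indices, and it is not a formal consequence of (\ref{t207}), whose proof is tied to the order-zero operator $\mathcal{R}$. This estimate is the real mathematical content of the step: the paper does not improvise it but quotes Proposition 4.2 of \cite{WUXUE}, recorded as (\ref{FRt207}), namely $\|[\mathcal{R}_{\gamma},u\cdot\nabla]f\|_{B_{2,2}^{0}}\leq C\|\nabla u\|_{L^{2}}(\|f\|_{B_{\infty,2}^{2-2\gamma}}+\|f\|_{L^{2}})$ for $1<\gamma<\tfrac{3}{2}$, in which the transport derivative is kept \emph{inside} the commutator, so that one pairs directly with $G\in L^{2}$ and no absorption of $\nabla G$ is needed; then $\|\tau\|_{B_{\infty,2}^{2-2\gamma}}\lesssim\|\tau\|_{L^{r}}$ for $r>\frac{1}{\gamma-1}$ by Lemma \ref{LIM03}, and $\|\nabla u\|_{L^{2}}\lesssim\|u\|_{L^{2}}+\|\Lambda^{\gamma}u\|_{L^{2}}$ is square-integrable in time by Lemma \ref{FRL31}. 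With (\ref{FRt207}) substituted for your heuristic, the rest of your proposal goes through essentially as written.
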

\begin{proof}[\textbf{Proof of  Lemma \ref{FRL32}}]
Taking the inner product of (\ref{FRt204}) with $G$, we obtain, after integration by parts
\begin{eqnarray}\label{FRt205}
\frac{1}{2}\frac{d}{dt}\|G\|_{L^{2}}^{2}+\|\Lambda^{\gamma}G\|_{L^{2}}^{2}=\mathcal{J}_{1}
+\mathcal{J}_{2}+\mathcal{J}_{3},
\end{eqnarray}
where
$$\mathcal{J}_{1}=-\int_{\mathbb{R}^{2}}{\mathcal {R}_{\gamma}\mathcal
{D}u G\,dx},\quad \mathcal{J}_{2}=\int_{\mathbb{R}^{2}}{[\mathcal
{R}_{\gamma},\,u\cdot\nabla]\tau G\,dx},$$
$$\mathcal{J}_{3}=\int_{\mathbb{R}^{2}}{\mathcal
{R}_{\gamma}(\tau\Omega-\Omega\tau)G\,dx}.$$
The first term admits the following estimate for any $1<\gamma\leq\frac{3}{2}$
\begin{eqnarray}\label{FRt206}
\mathcal{J}_{1} &\leq& \|\Lambda^{3-2\gamma}u\|_{L^{2}}\|G
\|_{L^{2}}\nonumber\\
&\lesssim& (\|u\|_{L^{2}}+\|\Lambda^{\gamma}
u\|_{L^{2}})\|G
\|_{L^{2}}.
\end{eqnarray}
Thanks to the Young inequality and the Gagliardo-Nirenberg inequality, one can show that for any $1<\gamma\leq\frac{4}{3}$
\begin{eqnarray}\label{FRye1}
\mathcal{J}_{3} &\leq& \|\mathcal
{R}_{\gamma}(\tau\Omega-\Omega\tau)\|_{L^{2}} \|G
\|_{L^{2}}\nonumber\\
&\lesssim& \|\tau\Omega-\Omega\tau\|_{L^{\frac{2}{2\gamma-1}}} \|G\|_{L^{2}} \nonumber\\
&\lesssim& \|\tau\|_{L^{\frac{2}{3(\gamma-1)}}} \|\Omega\|_{L^{\frac{2}{2-\gamma}}} \|G\|_{L^{2}}
\nonumber\\
&\lesssim&  (\|\tau\|_{L^{2}}+\|\tau\|_{L^{r}}) \|\nabla u\|_{L^{\frac{2}{2-\gamma}}} \|G\|_{L^{2}}\qquad \Big(r>\frac{2}{3(\gamma-1)}\Big)
\nonumber\\
&\lesssim& (\|\tau\|_{L^{2}}+\|\tau\|_{L^{r}}) \|\Lambda^{\gamma} u\|_{L^{2}} \|G\|_{L^{2}}
\nonumber\\
&\leq&
C+C(1+ \|\Lambda^{\gamma} u\|_{L^{2}}^{2})(\|G
\|_{L^{2}}^{2}+\|\tau\|_{L^{r}}^{2}).
\end{eqnarray}
By the following commutator estimate (see Proposition 4.2 in \cite{WUXUE})
\begin{eqnarray}\label{FRt207}\|[\mathcal {R}_{\gamma},\,u\cdot\nabla]f\|_{B_{2,2}^{0}}\leq C\|\nabla u\|_{L^{2}}(\|f\|_{B_{\infty,2}^{2-2\gamma}}+\|f\|_{L^{2}}),\quad 1<\gamma<\frac{3}{2}\end{eqnarray} for any smooth
divergence-free vector field, it follows that
\begin{eqnarray}\label{FRt208}
\mathcal{J}_{2} &=& \int_{\mathbb{R}^{2}}{[\mathcal
{R}_{\gamma},\,u\cdot\nabla]\tau G\,dx}\nonumber\\
&\lesssim& \|[\mathcal
{R}_{\gamma},\,u\cdot\nabla]\tau\|_{B_{2,2}^{0}}
\|G\|_{L^{2}}\nonumber\\
&\lesssim& C\|\nabla u\|_{L^{2}}(\|\tau\|_{B_{\infty,2}^{2-2\gamma}}+\|\tau\|_{L^{2}})\|G\|_{L^{2}}\nonumber\\
&\lesssim& C\|\nabla u\|_{L^{2}}(\|\tau\|_{L^{r}}+\|\tau\|_{L^{2}})\|G\|_{L^{2}}\qquad \Big(r>\frac{1}{\gamma-1}\Big)
\nonumber\\
&\lesssim& C(\|u\|_{L^{2}}+\|\Lambda^{\gamma} u\|_{L^{2}})(\|\tau\|_{L^{r}}+\|\tau\|_{L^{2}})\|G\|_{L^{2}}\nonumber\\
&\lesssim& C(\|u\|_{L^{2}}+\|\Lambda^{\gamma} u\|_{L^{2}}+\|\tau\|_{L^{2}}^{2})(\|G\|_{L^{2}}^{2}+\|\tau\|_{L^{r}}^{2}).
\end{eqnarray}
Inserting the above estimates (\ref{FRt206}), (\ref{FRt208}) and (\ref{FRye1}) into
(\ref{FRt205}), absorbing the dissipative term, we thus deduce 
\begin{eqnarray}\label{FRt209}
\frac{d}{dt}\|G\|_{L^{2}}^{2}+\|\Lambda^{\gamma}G\|_{L^{2}}^{2}
&\leq& C(1+\|u\|_{L^{2}}+\|\Lambda^{\gamma} u\|_{L^{2}}+\|\tau\|_{L^{2}}^{2})(\|G\|_{L^{2}}^{2}+\|\tau\|_{L^{r}}^{2}).
\end{eqnarray}
In order to close the above inequality, we need to establish the differential inequality of estimate of $\|\tau\|_{L^{r}}$.
Multiplying the stress tensor $\tau$ equation of (\ref{FROLD}) by
$|\tau|^{r-2}\tau$ and integrating over $\mathbb{R}^{2}$ yield
\begin{eqnarray}\label{FRt210}\frac{1}{r}\frac{d}{dt}\|\tau\|_{L^{r}}^{r}&=&\int_{\mathbb{R}^{2}}
\mathcal {D}u \tau|\tau|^{r-2}\,dx
\nonumber\\
&\leq& C\|\mathcal {D}u\|_{L^{r}} \|\tau\|_{L^{r}}^{r-1}\nonumber\\
&\leq& C\|\omega\|_{L^{r}} \|\tau\|_{L^{r}}^{r-1}\nonumber\\
&\leq& C(\|G\|_{L^{r}}+\|\mathcal
{R}_{\gamma}\tau\|_{L^{r}} )\|\tau\|_{L^{r}}^{r-1}\nonumber\\
&\leq& C(\|G\|_{L^{2}}+\|\Lambda^{\gamma}G\|_{L^{2}}+\|\tau\|_{L^{2}}+\|\tau\|_{L^{r}})
\|\tau\|_{L^{r}}^{r-1},
\end{eqnarray}
which further implies that
\begin{eqnarray}\label{FRt213}\frac{d}{dt}\|\tau\|_{L^{r}}^{2}
&\leq& C(\|G\|_{L^{2}}+\|\Lambda^{\gamma}G\|_{L^{2}}+\|\tau\|_{L^{2}}+\|\tau\|_{L^{r}})
\|\tau\|_{L^{r}}\nonumber\\
&\leq&
\frac{1}{4}\|\Lambda^{\gamma}G
\|_{L^{2}}^{2}+C+C(\|G\|_{L^{2}}^{2}+\|\tau\|_{L^{r}}^{2}).
\end{eqnarray}
Adding up the above estimates $(\ref{FRt209})$ and $(\ref{FRt213})$ altogether, we obtain
\begin{eqnarray}
\frac{d}{dt}(\|G\|_{L^{2}}^{2}+\|\tau\|_{L^{r}}^{2})+\|\Lambda^{\gamma}G\|_{L^{2}}^{2}
&\leq& C+C(1+\|u\|_{L^{2}}+\|\Lambda^{\gamma} u\|_{L^{2}}^{2}+\|\tau\|_{L^{2}}^{2})\nonumber\\&&\times(\|G\|_{L^{2}}^{2}+\|\tau\|_{L^{r}}^{2}).\nonumber
\end{eqnarray}
The classical Gronwall inequality allows us to obtain
$$\|G(t)\|_{L^{2}}^{2}+\|\tau\|_{L^{r}}^{2}+\int_{0}^{T}{
\|\Lambda^{\gamma}G(s)\|_{L^{2}}^{2}\,ds}\leq
C<\infty.$$
 Therefore, this concludes the proof of Lemma \ref{FRL32}.
\end{proof}

With Lemma \ref{FRL32} in hand, we can show
\begin{lemma}\label{FRL33}
Under the assumptions of Lemma \ref{FRL32}, there exist some constants $C$
such that for any $T>0$
\begin{eqnarray}\label{FRt701}
\|\tau(t)\|_{L^{\infty}}+\int_{0}^{T}{\|\nabla u(s)
\|_{L^{\infty}}\,ds}\leq C<\infty
\end{eqnarray}
for any $t\in[0, T].$
\end{lemma}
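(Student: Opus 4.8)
The plan is to first reduce the $L^{\infty}$ bound on $\tau$ to a bound on $\int_{0}^{T}\|\nabla u\|_{L^{\infty}}\,ds$ by exploiting the transport structure of the second equation of (\ref{FROLD}), and then to estimate $\int_{0}^{T}\|\nabla u\|_{L^{\infty}}\,ds$ via the decomposition $\omega=G+\mathcal{R}_{\gamma}\tau$ together with the bounds already secured in Lemma \ref{FRL32}. Crucially, this estimate must be obtained \emph{without} recourse to any higher Sobolev norm of $u$, since it is precisely the ingredient that will later feed the top-order Gronwall argument.

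First I would record that the corotational term does not affect the pointwise size of $\tau$. Pairing $(\ref{FROLD})_{2}$ with $\tau$ pointwise and using $(\tau\Omega-\Omega\tau):\tau=0$ (as in (\ref{tttt11})) gives $(\partial_{t}+u\cdot\nabla)|\tau|\leq|\mathcal{D}u|\leq C|\nabla u|$. Integrating along the (divergence-free, Lipschitz) flow of $u$ yields $\|\tau(t)\|_{L^{\infty}}\leq\|\tau_{0}\|_{L^{\infty}}+C\int_{0}^{t}\|\nabla u(s)\|_{L^{\infty}}\,ds$. Hence everything reduces to proving $\int_{0}^{T}\|\nabla u\|_{L^{\infty}}\,ds\leq C$.

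Next, using the two-dimensional Biot--Savart law $\nabla u=\nabla\nabla^{\perp}\Delta^{-1}\omega$ and $\omega=G+\mathcal{R}_{\gamma}\tau$, I would split $\nabla u=\nabla\nabla^{\perp}\Delta^{-1}G+A\tau$ with $A:=\nabla\nabla^{\perp}\Delta^{-1}\mathcal{R}_{\gamma}$. The first operator is a Calder\'on--Zygmund operator of order $0$, so $\|\nabla\nabla^{\perp}\Delta^{-1}G\|_{L^{\infty}}\lesssim\|G\|_{L^{2}}+\|G\|_{\dot{H}^{\gamma}}$ by the embedding $L^{2}\cap\dot{H}^{\gamma}\hookrightarrow L^{\infty}$ (valid precisely because $\gamma>1$). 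Since Lemma \ref{FRL32} gives $G\in L^{\infty}_{T}L^{2}\cap L^{2}_{T}\dot{H}^{\gamma}$, Cauchy--Schwarz in time yields $\int_{0}^{T}(\|G\|_{L^{2}}+\|G\|_{\dot{H}^{\gamma}})\,ds\leq CT+CT^{1/2}(\int_{0}^{T}\|\Lambda^{\gamma}G\|_{L^{2}}^{2}\,ds)^{1/2}\leq C$. The operator $A$ carries a symbol of order $2-2\gamma<0$; this genuine smoothing (again available only because $\gamma>1$) is what lets me control $A\tau$ in $L^{\infty}$ from the $L^{p}$ bounds on $\tau$ alone. A Littlewood--Paley decomposition makes the high-frequency blocks summable in $B^{0}_{\infty,1}$ provided $r>\frac{1}{\gamma-1}$, giving a bound by $\|\tau\|_{L^{r}}$, while the low-frequency part is controlled by $\|\tau\|_{L^{2}}$; altogether $\|A\tau\|_{L^{\infty}}\lesssim\|\tau\|_{L^{2}}+\|\tau\|_{L^{r}}\leq C$ uniformly in time by Lemma \ref{FRL32}. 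Combining the two contributions gives $\int_{0}^{T}\|\nabla u\|_{L^{\infty}}\,ds\leq C$, and then the transport estimate above closes the bound on $\|\tau\|_{L^{\infty}}$.

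The main obstacle is the $\tau$-contribution to $\|\nabla u\|_{L^{\infty}}$: the stress $\tau$ carries no derivative control (only the $L^{p}$ bounds of Lemma \ref{FRL32}), so one cannot place $\omega$, hence $\nabla u$, in any positive Sobolev space through $\mathcal{R}_{\gamma}\tau$ when $\gamma$ is close to $1$ (indeed $\mathcal{R}_{\gamma}\tau$ gains only $2\gamma-2$ derivatives, which is $\leq\frac{2}{3}$ for $\gamma\leq\frac{4}{3}$). The resolution is the frequency splitting that trades high frequencies against $\|\tau\|_{L^{r}}$ with $r$ large and low frequencies against $\|\tau\|_{L^{2}}$; securing the time-integrability of the $G$-part, where $\gamma>1$ is used to reach $L^{\infty}$, is the other delicate point.
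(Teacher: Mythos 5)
Your argument is correct and is essentially the paper's own proof: both rest on the decomposition $\omega=G+\mathcal{R}_{\gamma}\tau$, the bounds of Lemma \ref{FRL32} (namely $G\in L^{\infty}_{T}L^{2}\cap L^{2}_{T}\dot{H}^{\gamma}$ and $\tau\in L^{\infty}_{T}(L^{2}\cap L^{r})$), the assumption $\gamma>1$ to reach $\nabla u\in L^{1}_{T}L^{\infty}$, and then a transport-type step for $\|\tau\|_{L^{\infty}}$ (the paper lets $r\to\infty$ in the $L^{r}$ estimate and applies Gronwall, which is equivalent to your integration along characteristics). The only difference is packaging: the paper controls $\|\Lambda^{2\gamma-2}\omega\|_{L^{2/(\gamma-1)}}\lesssim\|\Lambda^{\gamma}G\|_{L^{2}}+\|\tau\|_{L^{2/(\gamma-1)}}$, exploiting that $\Lambda^{2\gamma-2}\mathcal{R}_{\gamma}$ is an order-zero Calder\'on--Zygmund operator, and concludes via $\|\nabla u\|_{L^{\infty}}\lesssim\|u\|_{L^{2}}+\|\Lambda^{2\gamma-2}\omega\|_{L^{2/(\gamma-1)}}$, whereas you use $H^{\gamma}\hookrightarrow L^{\infty}$ for the $G$-part and a Littlewood--Paley estimate for the order-$(2-2\gamma)$ operator acting on $\tau$ --- where your low-frequency bound by $\|\tau\|_{L^{2}}$ does hold in the regime considered, since Bernstein from $L^{2}$ to $L^{\infty}$ costs $2^{j}$ and the net exponent $3-2\gamma$ is positive for $\gamma\le\frac{4}{3}$, a point worth stating explicitly.
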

\begin{proof}[\textbf{Proof of  Lemma \ref{FRL33}}]
By the relation $G=\omega-\mathcal {R}_{\gamma}\tau$, one directly has
\begin{eqnarray}\label{FRt702}
\|\Lambda^{2\gamma-2}\omega\|_{L^{\frac{2}{\gamma-1}}}&\lesssim& \|\Lambda^{2\gamma-2} G\|_{L^{\frac{2}{\gamma-1}}}+\|\Lambda^{2\gamma-2}\mathcal
{R}_{\gamma}\tau\|_{L^{\frac{2}{\gamma-1}}}\nonumber\\&\lesssim& \|\Lambda^{\gamma} G\|_{L^{2}}+\|\tau\|_{L^{\frac{2}{\gamma-1}}}.
\end{eqnarray}
By the estimate (\ref{FRt202}), it thus gives
$$\int_{0}^{T}{\|\Lambda^{2\gamma-2}\omega(t)\|_{L^{\frac{2}{\gamma-1}}}\,dt}<\infty,$$
which together with the embedding leads to
$$\int_{0}^{T}{\|\nabla u(t)\|_{L^{\infty}}\,dt}\lesssim\int_{0}^{T}{(\|u(t)\|_{L^{2}}+\|\Lambda^{2\gamma-2}
\omega(t)\|_{L^{\frac{2}{\gamma-1}}})\,dt}<\infty.$$
On the other hand, we have
\begin{eqnarray}\frac{d}{dt}\|\tau\|_{L^{r}}\leq
 C\|\mathcal {D}u\|_{L^{r}} \|\tau\|_{L^{r}}\leq
 C\|\nabla u\|_{L^{r}} \|\tau\|_{L^{r}}.\nonumber
\end{eqnarray}
Letting $r\rightarrow\infty$, we get
\begin{eqnarray}\frac{d}{dt}\|\tau\|_{L^{\infty}}\leq
 C\|\nabla u\|_{L^{\infty}} \|\tau\|_{L^{\infty}}.\nonumber
\end{eqnarray}
The classical Gronwall inequality entails
$$\|\tau\|_{L^{\infty}}<\infty.$$
We thus complete the proof of  Lemma \ref{FRL33}.
\end{proof}

We are now ready to
prove the following key lemma.
\begin{lemma}\label{FRL34} Under the assumptions of Lemma \ref{FRL32}, there exist some constants $C$
such that for any $T>0$
\begin{eqnarray}\label{FRt215}
\|\omega(t)\|_{L^{2}}^{2}+\|\nabla\tau(t)\|_{L^{2}}^{2}+\int_{0}^{T}{\|\Lambda^{\gamma}
\omega(s)\|_{L^{2}}^{2}\,ds}\leq
C<\infty
\end{eqnarray}
for any $t\in[0, T].$
\end{lemma}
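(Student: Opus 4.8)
The plan is to mirror the $H^{1}$-type argument of Lemma \ref{L33}, performing an energy estimate on the pair $(\omega,\nabla\tau)$, but now the only available smoothing comes from the velocity dissipation $\|\Lambda^{\gamma}\omega\|_{L^{2}}^{2}$, since in the system (\ref{FROLD}) there is no dissipation acting on $\tau$. First I would multiply the vorticity equation (\ref{FRt203}) by $\omega$ to obtain
$$\frac{1}{2}\frac{d}{dt}\|\omega\|_{L^{2}}^{2}+\|\Lambda^{\gamma}\omega\|_{L^{2}}^{2}=\int_{\mathbb{R}^{2}}({\rm curl\,div}\,\tau)\,\omega\,dx,$$
and simultaneously apply $\nabla$ to $(\ref{FROLD})_{2}$ and test with $\nabla\tau$. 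Adding the two identities gives, exactly as in (\ref{t218}),
$$\frac{1}{2}\frac{d}{dt}(\|\omega\|_{L^{2}}^{2}+\|\nabla\tau\|_{L^{2}}^{2})+\|\Lambda^{\gamma}\omega\|_{L^{2}}^{2}=\mathcal{K}_{1}+\mathcal{K}_{2}+\mathcal{K}_{3}+\mathcal{K}_{4},$$
with $\mathcal{K}_{1}=\int({\rm curl\,div}\,\tau)\omega\,dx$, $\mathcal{K}_{2}=\int\nabla\mathcal{D}u\cdot\nabla\tau\,dx$, $\mathcal{K}_{3}=-\int\nabla(u\cdot\nabla\tau):\nabla\tau\,dx$ and $\mathcal{K}_{4}=-\int\nabla(\tau\Omega-\Omega\tau)\cdot\nabla\tau\,dx$.

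The transport term $\mathcal{K}_{3}$ reduces, after integrating by parts and using $\nabla\cdot u=0$, to $-\int\partial_{l}u_{i}\,\partial_{i}\tau_{kj}\,\partial_{l}\tau_{kj}\,dx$, which is bounded by $\|\nabla u\|_{L^{\infty}}\|\nabla\tau\|_{L^{2}}^{2}$; likewise the "good" part of $\mathcal{K}_{4}$, namely $\int|\nabla\tau|\,|\Omega|\,|\nabla\tau|\,dx$, is controlled by $\|\nabla u\|_{L^{\infty}}\|\nabla\tau\|_{L^{2}}^{2}$. Since Lemma \ref{FRL33} provides $\int_{0}^{T}\|\nabla u\|_{L^{\infty}}\,dt<\infty$, both contributions are harmless for a Gronwall argument. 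The remaining pieces all cost one full gradient of the vorticity, i.e. $\|\Lambda\omega\|_{L^{2}}$: integrating $\mathcal{K}_{1}$ by parts once gives $\mathcal{K}_{1}\lesssim\|\nabla\tau\|_{L^{2}}\|\Lambda\omega\|_{L^{2}}$, similarly $\mathcal{K}_{2}\lesssim\|\Lambda\omega\|_{L^{2}}\|\nabla\tau\|_{L^{2}}$, while the surviving part of $\mathcal{K}_{4}$ is $\int|\tau|\,|\nabla\Omega|\,|\nabla\tau|\,dx\lesssim\|\tau\|_{L^{\infty}}\|\Lambda\omega\|_{L^{2}}\|\nabla\tau\|_{L^{2}}$, where $\|\tau\|_{L^{\infty}}\le C$ is again taken from Lemma \ref{FRL33}.

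To close the estimate I would interpolate $\|\Lambda\omega\|_{L^{2}}\le\|\omega\|_{L^{2}}^{1-\frac{1}{\gamma}}\|\Lambda^{\gamma}\omega\|_{L^{2}}^{\frac{1}{\gamma}}$, which is valid precisely because $\gamma>1$, so that $\frac{1}{\gamma}<1$. Each of the three terms then takes the form $\|\omega\|_{L^{2}}^{1-\frac{1}{\gamma}}\|\Lambda^{\gamma}\omega\|_{L^{2}}^{\frac{1}{\gamma}}\|\nabla\tau\|_{L^{2}}$ up to the bounded factor $\|\tau\|_{L^{\infty}}$, and Young's inequality with exponents $2\gamma$ and $\frac{2\gamma}{2\gamma-1}$ splits off a small multiple of $\|\Lambda^{\gamma}\omega\|_{L^{2}}^{2}$ and leaves $C\|\omega\|_{L^{2}}^{\frac{2(\gamma-1)}{2\gamma-1}}\|\nabla\tau\|_{L^{2}}^{\frac{2\gamma}{2\gamma-1}}$; since these two exponents sum to $2$, the remainder is dominated by $C(\|\omega\|_{L^{2}}^{2}+\|\nabla\tau\|_{L^{2}}^{2})$. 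Absorbing the dissipative pieces into the left side yields
$$\frac{d}{dt}(\|\omega\|_{L^{2}}^{2}+\|\nabla\tau\|_{L^{2}}^{2})+\|\Lambda^{\gamma}\omega\|_{L^{2}}^{2}\le C(1+\|\nabla u\|_{L^{\infty}})(\|\omega\|_{L^{2}}^{2}+\|\nabla\tau\|_{L^{2}}^{2}),$$
and the time-integrability of $\|\nabla u\|_{L^{\infty}}$ from Lemma \ref{FRL33} lets Gronwall's inequality deliver (\ref{FRt215}). I expect the main obstacle to be the term $\int|\tau|\,|\nabla\Omega|\,|\nabla\tau|\,dx$ inside $\mathcal{K}_{4}$: here $\nabla\Omega$ carries a full derivative of the vorticity and, with no dissipation available on $\tau$, one is forced to spend the velocity dissipation $\|\Lambda^{\gamma}\omega\|_{L^{2}}^{2}$ through the interpolation above. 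It is exactly the strict inequality $\gamma>1$ that keeps the dissipation exponent $\frac{1}{\gamma}$ strictly below one, making both the Young step and the degree-two homogeneity of the resulting estimate work.
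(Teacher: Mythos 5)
Your proposal is correct and follows essentially the same route as the paper: the same combined energy estimate on $(\omega,\nabla\tau)$, the same splitting into $\mathcal{K}_{1},\dots,\mathcal{K}_{4}$, the same interpolation $\|\nabla\omega\|_{L^{2}}\lesssim\|\omega\|_{L^{2}}^{1-\frac{1}{\gamma}}\|\Lambda^{\gamma}\omega\|_{L^{2}}^{\frac{1}{\gamma}}$ with Young's inequality to absorb the dissipation, and the same Gronwall closure via the bounds $\|\tau\|_{L^{\infty}}\leq C$ and $\int_{0}^{T}\|\nabla u\|_{L^{\infty}}\,dt<\infty$ from Lemma \ref{FRL33}. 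The only cosmetic difference is that the paper retains the factor $\|\tau\|_{L^{\infty}}^{\frac{2\gamma}{2\gamma-1}}$ inside the Gronwall coefficient while you invoke its boundedness immediately, which changes nothing.
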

\begin{proof}[\textbf{Proof of  Lemma \ref{FRL34}}]
Just as in proving Lemma \ref{L33}, we immediately obtain
\begin{eqnarray}\label{FRt218}
&&\frac{1}{2}\frac{d}{dt}(\|\omega\|_{L^{2}}^{2}+\|\nabla
\tau\|_{L^{2}}^{2})+\|\Lambda^{\gamma} \omega\|_{L^{2}}^{2} \nonumber\\
&=&\int_{\mathbb{R}^{2}}{({\rm curl \,div}
(\tau))\omega\,dx}+\int_{\mathbb{R}^{2}}{\nabla\mathcal {D}u\cdot \nabla
\tau\,dx}-\int_{\mathbb{R}^{2}}{\nabla(u\cdot\nabla \tau): \nabla
\tau\,dx}\nonumber\\&&+
\int_{\mathbb{R}^{2}}{\nabla(\tau\Omega-\Omega\tau)\cdot\nabla\tau\,dx}\nonumber\\&:=&
\mathcal{K}_{1}+\mathcal{K}_{2}+\mathcal{K}_{3}+\mathcal{K}_{4}.
\end{eqnarray}
These terms on the right-hand side
of (\ref{FRt218}) can be estimated as follows. By the H$\rm\ddot{o}$lder inequality and the Gagliardo-Nirenberg inequality, it lead to
\begin{eqnarray}\label{FRt219}
\mathcal{K}_{1} &\leq& \|\nabla\tau\|_{L^{2}} \|\nabla w \|_{L^{2}}\nonumber\\
 &\leq& \|\nabla\tau\|_{L^{2}} \| w \|_{L^{2}}^{1-\frac{1}{\gamma}}\| \Lambda^{\gamma}w \|_{L^{2}}^{\frac{1}{\gamma}}\nonumber\\
&\leq& \frac{1}{4}\|\Lambda^{\gamma}\omega\|_{L^{2}}^{2}+C(\|\nabla \tau
\|_{L^{2}}^{2}+ \| w \|_{L^{2}}^{2}),
\end{eqnarray}
\begin{eqnarray}\label{FRt220}
\mathcal{K}_{2} &\leq& \|\nabla\tau\|_{L^{2}} \|\nabla \mathcal {D}u \|_{L^{2}}\nonumber\\
&\lesssim&\|\nabla\tau\|_{L^{2}} \|\nabla \omega \|_{L^{2}}
\nonumber\\&\leq& \frac{1}{4}\|\Lambda^{\gamma}\omega\|_{L^{2}}^{2}+C(\|\nabla \tau
\|_{L^{2}}^{2}+ \| w \|_{L^{2}}^{2}),
\end{eqnarray}
\begin{eqnarray}\label{FRt221}
\mathcal{K}_{3} &\leq& C\|\nabla u\|_{L^{\infty}}\|\nabla \tau\|_{L^{2}}^{2},
\end{eqnarray}
and
\begin{eqnarray}\label{FRt222}
\mathcal{K}_{4} &\leq& C \int_{\mathbb{R}^{2}}{|\nabla\tau|\,|\Omega|\,\nabla\tau|\,dx}+C \int_{\mathbb{R}^{2}}{|\tau|\,|\nabla\Omega|\,\nabla\tau|\,dx}
\nonumber\\&\leq&
C\|\nabla u\|_{L^{\infty}}\|\nabla \tau\|_{L^{2}}^{2}+C\|\tau\|_{L^{\infty}}
\|\nabla\Omega\|_{L^{2}}
\|\nabla\tau\|_{L^{2}}
\nonumber\\&\leq&
C\|\nabla u\|_{L^{\infty}}\|\nabla \tau\|_{L^{2}}^{2}+C\|\tau\|_{L^{\infty}}
\|\nabla \omega\|_{L^{2}}
\|\nabla\tau\|_{L^{2}}\nonumber\\&\leq&
C\|\nabla u\|_{L^{\infty}}\|\nabla \tau\|_{L^{2}}^{2}+C\|\tau\|_{L^{\infty}}
\| w \|_{L^{2}}^{1-\frac{1}{\gamma}}\| \Lambda^{\gamma}w \|_{L^{2}}^{\frac{1}{\gamma}}
\|\nabla\tau\|_{L^{2}}
\nonumber\\&\leq&
\frac{1}{4}\|\Lambda^{\gamma}\omega\|_{L^{2}}^{2}+ C\|\nabla u\|_{L^{\infty}}\|\nabla \tau\|_{L^{2}}^{2}+C\|\tau\|_{L^{\infty}}^{\frac{2\gamma}{2\gamma-1}}(\|\nabla \tau
\|_{L^{2}}^{2}+ \| w \|_{L^{2}}^{2}).
\end{eqnarray}
Inserting the four previous estimates (\ref{FRt219}), (\ref{FRt220}), (\ref{FRt221}) and (\ref{FRt222}) into (\ref{FRt218}) yields
\begin{eqnarray}\label{FRt223}
\frac{d}{dt}(\|\omega\|_{L^{2}}^{2}+\|\nabla
\tau\|_{L^{2}}^{2})+\|\Lambda^{\gamma} \omega\|_{L^{2}}^{2}\leq C(1+\|\nabla u\|_{L^{\infty}}+\|\tau\|_{L^{\infty}}^{\frac{2\gamma}{2\gamma-1}})(\|\nabla \tau
\|_{L^{2}}^{2}+ \| w \|_{L^{2}}^{2}).\nonumber
\end{eqnarray}
Noticing the estimate (\ref{FRt701}), we can conclude the desired result by applying Gronwall inequality.
Consequently, this concludes the proof of Lemma \ref{FRL34}.
\end{proof}

\begin{proof}[\textbf{Proof of Theorem \ref{Th2}}]
Applying $\Delta$ to system (\ref{FROLD}) and
taking the $L^{2}$ inner product with $\Delta u$ and
$\Delta\tau$ respectively, adding them up, we have
\begin{eqnarray}\label{FRt224}
&&\frac{1}{2}\frac{d}{dt}(\|\Delta u(t)\|_{L^{2}}^{2}+
\|\Delta\tau(t)\|_{L^{2}}^{2})+\|\Lambda^{2+\gamma}
u\|_{L^{2}}^{2} \nonumber\\
&\lesssim&\|\Delta\tau\|_{L^{2}} \|\Lambda^{3} u\|_{L^{2}}
+\Big|\int_{\mathbb{R}^{2}}{[\Delta, u\cdot\nabla]u\cdot
\Delta u\,dx}\Big|+\Big|\int_{\mathbb{R}^{2}}{ [\Delta,
u\cdot\nabla]\tau:
\Delta\tau \,dx}\Big|
\nonumber\\&&+\int_{\mathbb{R}^{2}}{\Delta(\tau\Omega-\Omega\tau):\Delta\tau\,dx}
\nonumber\\
&:=&\mathcal{ L}_{1}+\mathcal{ L}_{2}+\mathcal{ L}_{3}+\mathcal{ L}_{4}.
\end{eqnarray}
The four terms can be bounded by
\begin{eqnarray}
\mathcal{ L}_{1}&\leq&\|\Delta\tau\|_{L^{2}} \|\Delta u\|_{L^{2}}^{\frac{\gamma-1}{\gamma}}
\|\Lambda^{2+\gamma} u\|_{L^{2}}^{\frac{1}{\gamma}} \nonumber\\
&\lesssim& \frac{1}{4}\|\Lambda^{2+\gamma}
u\|_{L^{2}}^{2}+C(\|\Delta u\|_{L^{2}}^{2}+\|\Delta\tau\|_{L^{2}}^{2}),\nonumber
\end{eqnarray}
\begin{eqnarray}
\mathcal{ L}_{2}&\leq& C \|[\Delta,
u\cdot\nabla]u\|_{L^{2}}\|\Delta u\|_{L^{2}}\nonumber\\ &\leq& C
\|\nabla u\|_{L^{\infty}}\|\Delta u\|_{L^{2}}^{2},\nonumber
\end{eqnarray}
\begin{eqnarray}
\mathcal{ L}_{3}&=& \Big|\int_{\mathbb{R}^{2}}{ [\Delta,
u\cdot\nabla]\tau:
\Delta\tau \,dx}\Big|
\nonumber\\&=& \Big|\int_{\mathbb{R}^{2}}{ \Delta
u\nabla\tau
\Delta\tau \,dx}\Big|+\Big|\int_{\mathbb{R}^{2}}{ \nabla
u \nabla\nabla \tau
\Delta\tau \,dx}\Big|
\nonumber\\
&\leq& C\|\Delta u\|_{L^{\infty}}\|\nabla\tau\|_{L^{2}}\|\Delta\tau\|_{L^{2}}+C\|\nabla
u\|_{L^{\infty}}\|\Delta\tau\|_{L^{2}}^{2}\nonumber\\
&\leq& C\|\Delta u\|_{L^{2}}^{\frac{\gamma-1}{\gamma}}\|\Lambda^{2+\gamma}u\|_{L^{2}}^{\frac{1}
{\gamma}}\|\nabla\tau\|_{L^{2}}\|\Delta\tau\|_{L^{2}}+C\|\nabla
u\|_{L^{\infty}}\|\Delta\tau\|_{L^{2}}^{2}
\nonumber\\
&\leq&
\frac{1}{4}\|\Lambda^{2+\gamma}
u\|_{L^{2}}^{2}+C\|\nabla\tau\|_{L^{2}}^{\frac{2\gamma}{2\gamma-1}}(\|\Delta u\|_{L^{2}}^{2}+
\|\Delta\tau\|_{L^{2}}^{2})+C\|\nabla
u\|_{L^{\infty}}\|\Delta\tau\|_{L^{2}}^{2},\nonumber
\end{eqnarray}
and
\begin{eqnarray}
\mathcal{L}_{4} &\leq& C \int_{\mathbb{R}^{2}}{\Delta(\tau\Omega)\Delta\tau\,dx}
\nonumber\\&\leq&
C\|\nabla u\|_{L^{\infty}}\|\Delta\tau\|_{L^{2}}^{2}+C\|\tau\|_{L^{\infty}}
\|\Delta\Omega\|_{L^{2}}
\|\Delta\tau\|_{L^{2}}
\nonumber\\&\leq&
C\|\nabla u\|_{L^{\infty}}\|\Delta\tau\|_{L^{2}}^{2}+C\|\tau\|_{L^{\infty}}
\|\Delta u\|_{L^{2}}^{\frac{\gamma-1}{\gamma}}\|\Lambda^{2+\gamma}
u\|_{L^{2}}^{\frac{1}{\gamma}}
\|\Delta\tau\|_{L^{2}}
\nonumber\\
&\leq&
\frac{1}{4}\|\Lambda^{2+\gamma}
u\|_{L^{2}}^{2}+C\|\tau\|_{L^{\infty}}^{\frac{2\gamma}{2\gamma-1}}(\|\Delta u\|_{L^{2}}^{2}+
\|\Delta\tau\|_{L^{2}}^{2})+C\|\nabla
u\|_{L^{\infty}}\|\Delta\tau\|_{L^{2}}^{2}.\nonumber
\end{eqnarray}
Substituting all the preceding estimates into (\ref{FRt224}), we find that
\begin{eqnarray}\label{FRt231}
&&\frac{d}{dt}(\|\Delta u(t)\|_{L^{2}}^{2}+
\|\Delta\tau(t)\|_{L^{2}}^{2})+\|\Lambda^{2+\gamma}
u\|_{L^{2}}^{2}+\|\Lambda^{2+\alpha}
\tau\|_{L^{2}}^{2}\nonumber\\
&\lesssim&C(1+\|\nabla
u\|_{L^{\infty}}+\|\nabla\tau\|_{L^{2}}^{\frac{2\gamma}{2\gamma-1}}+
\|\tau\|_{L^{\infty}}^{\frac{2\gamma}{2\gamma-1}})(\|\Delta u\|_{L^{2}}^{2}+
\|\Delta\tau\|_{L^{2}}^{2}).\nonumber
\end{eqnarray}
We thus get by the Gronwall inequality that
\begin{eqnarray}\label{FRt231}\|\Delta u(t)\|_{L^{2}}^{2}+
\|\Delta\tau(t)\|_{L^{2}}^{2}+\int_{0}^{T}{(\|\Lambda^{2+\gamma}
u\|_{L^{2}}^{2}+\|\Lambda^{2+\alpha}
\tau\|_{L^{2}}^{2})(t)\,dt}<\infty.
\end{eqnarray}
Finally, the $H^{s}$-estimate with $s>2$ can be easily deduced with the help of the above estimate (\ref{FRt231}).
Thus, we conclude the proof of Theorem \ref{Th2}.
\end{proof}

\vskip .4in

\end{document}